\newtheorem{nts}{Note to self}
\title{Computing cohomology of configuration spaces}
\author{Megan Maguire\\
(with an appendix by Matthew Christie and Derek Francour) }
\address{Department of Mathematics\\
University of Wisconsin-Madison \\ 480 Lincoln Drive \\
Madison, WI 53705 USA} 
\email{mmaguire2@math.wisc.edu}
\begin{document}

\maketitle

\begin{abstract}
We give a concrete method to explicitly compute the rational cohomology of the unordered configuration spaces of connected, oriented, closed, even-dimensional manifolds of finite type which we have implemented in Sage \cite{sage}. As an application, we give a complete computation of the stable and unstable rational cohomology of unordered configuration spaces in some cases, including that of $\C\P^3$ and a genus 1 Riemann surface, which is equivalently the homology of the elliptic braid group. In an appendix, we also give large tables of unstable and stable Betti numbers of unordered configuration spaces. From these, we empirically observe stability phenomenon in the unstable cohomology of unordered configuration spaces of some manifolds, some of which we prove and some of which we state as conjectures. 

\end{abstract}

\section{Introduction}

\subsection{Cohomological stability of configuration spaces.}

Given a sequence of topological spaces or groups, $\{X_n\}$, \emph{(rational) cohomological stability} is the property that, for each $i\geq 0$, $$H^i(X_n; \Q) = H^i(X_{n+1}; \Q)$$ for $n \geq f(i)$, where $f(i)$ is some function of $i$. We call $H^i(X_n; \Q)$ for $n \geq f(i)$ the \emph{stable cohomology groups} of $\{X_n\}$ and $n \geq f(i)$ the \emph{stable range}. Conversely, $H^i(X_n; \Q)$ for $n < f(i)$ are the \emph{unstable cohomology groups} of $\{X_n\}$, and $n < f(i)$ is the \emph{unstable range}.

Let $X$ be a topological space and $n \in \N.$ The $n$th \emph{ordered configuration space} of $X$ (or the space of $n$ distinct labeled points in $X$) is $$\PConf^nX = \left(X^n - \bigcup_{1 \leq i < j \leq n} \Delta_{ij}\right)$$ where $\Delta_{ij} := \{(x_1, \hdots, x_n) \in X^n : x_i = x_j\}.$ The symmetric group $S_n$ acts on $\PConf^nX$ by permuting coordinates, and if we mod out by this action, we obtain the $n$th \emph{unordered configuration space} of $X$ (or the space of $n$ distinct unlabeled points in $X$) $$\Conf^nX := \PConf^nX / S_n.$$ In this paper, we are primarily interested in the unorderd configuration spaces, $\Conf^nX$, for $X$ a manifold.

This paper, using previously developed theoretical framework, gives a very concrete method to explicitly compute $H^i(\Conf^nX; \Q)$ for $X$ a connected, oriented, closed, even-dimensional manifold of finite type which we have implemented in Sage \cite{sage}. In the appendix, we give several tables of output from our program from which we empirically observe many more phenomena in the cohomology of configuration spaces than just the well-known phenomenon of cohomological stability.

There has been recent work showing that the actual stable Betti numbers of some specific manifolds, for example closed surfaces, have structure and exhibiting structure in the unstable Betti numbers, as well \cite{Drummond-ColeKnudsen2016, Scheissl2016, MillerWilson2016}. As an application of our approach, we give examples of such structure, some conjectured and some proven.

\subsection{Previous Work}
One of the first stability results for configuration spaces was given by Arnol'd \cite{Arnol'd1969} in 1969 who proved that there are inclusions $$\Conf^n\R^2 \hookrightarrow \Conf^{n+1}\R^2$$ that induce isomorphsims $$H_i(\Conf^n\R^2;\Z) \rightarrow H_i(\Conf^{n+1}\R^2; \Z)$$ for $n$ sufficiently large (depending on $i$).

McDuff \cite[Theorem 1.3]{McDuff1975} and Segal \cite[Proposition A.1]{Segal1979} generalized Arnol'd's result to open manifolds. McDuff defined a map (often referred to as the \emph{scanning map}) from $\Conf^nX$ to the the space of degree-k compactly-supported sections of the
fiberwise one-point compactification of the tangent bundle of $X$ and used that to prove integral homological stability for open manifolds. Segal then gave explicit bounds for the stable range and a new proof of homological stability via an argument more similar to Arnol'd's. 

Integral homological stability is known not to hold in general for closed manifolds. For example, $H_1(\Conf^nS^2; \Z) = \Z/(2n-2)\Z$ which has a dependence on $n$ that cannot be avoided by simply taking $n$ to be very large. For many years, it was thought that integral homological stability for open manifolds was the best one could do until Church considered the problem via the lens of \emph{representation stability}. By regarding $H^i(\PConf^n X; \Q)$ as a rational representation of $S_n$, Church \cite[Theorem 1]{Church2012} proved that the decomposition of $H^i(\PConf^nX; \Q)$ into irreducible representations remains the same in some sense for $n$ sufficiently large (depending on $i$) when $X$ is a connected, orientable, manifold with the homotopy type of a finite CW complex. (For example, the trivial representation of $S_n$ can be thought of as the same as the trivial representation of $S_m$ even when $n \neq m.$) Then as a corollary (\cite[Corallary 3]{Church2012}), Church concludes that $H^i(\Conf^n X; \Q) = H^i(\Conf^{n+1}X; \Q)$ for $n > i$, i.e. that the \emph{rational} cohomology groups of $\Conf^n X$ satisfy cohomological stability. Shortly following Church's proof, Randal-Williams \cite[Theorem C]{Randal-Williams2013} recovered Church's stability result using more traditional topological methods and was able to give an improvement on the bound of the stable range in some cases. Using the method of factorization homology, Knudsen \cite[Theorem 1.3]{Knudsen2015} was able to generalize this to non-orientable manifolds.

Combining these results yields the following stability theorem.

\begin{theorem}[\cite{Arnol'd1969, McDuff1975, Segal1979, Church2012, Randal-Williams2013, Knudsen2015}]
\label{stability}
For $X$ a connected manifold with $H^*(X;\Q)$ finite-dimensional and $n\geq i+1$, we have that
$$
H^i(\Conf^n X; \Q)=H^i(\Conf^{n+1} X;\Q).
$$
\end{theorem}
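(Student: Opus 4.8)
The plan is to pass from the ordered configuration space, where the symmetric group acts, to the unordered one by taking invariants. Since the points of a configuration are distinct, $S_n$ acts freely on $\PConf^n X$, so the quotient map $\PConf^n X \to \Conf^n X$ is a covering. Working over $\Q$, the transfer (averaging) argument then yields a natural isomorphism
$$
H^i(\Conf^n X; \Q) \cong H^i(\PConf^n X; \Q)^{S_n},
$$
identifying the cohomology we want to control with the $S_n$-invariant part of the cohomology of the ordered configuration space. Thus it suffices to show that the dimension of this invariant subspace is independent of $n$ once $n \geq i+1$.

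First I would record the $S_n$-representation structure on $H^i(\PConf^n X; \Q)$ and organize the whole sequence $\{H^i(\PConf^n X; \Q)\}_n$ into a single algebraic object, a finitely generated FI-module, following Church's analysis. The essential input is that the maps relating configurations of different sizes make these cohomology groups into a consistent sequence of representations whose irreducible decomposition eventually stabilizes (representation stability). Once finite generation in the appropriate degree is established, a general structural fact about finitely generated FI-modules applies: the multiplicity of the trivial representation, equivalently $\dim H^i(\PConf^n X; \Q)^{S_n}$, is eventually constant in $n$. Combined with the transfer isomorphism, this gives rational cohomological stability for $\Conf^n X$.

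The main obstacle is obtaining the sharp stable range $n \geq i+1$ rather than merely ``$n$ sufficiently large.'' Here I would track the explicit generation and relation bounds governing the FI-module structure, using Church's estimate that representation stability sets in for $n > i$; alternatively, the more geometric scanning-map and spectral-sequence arguments of McDuff, Segal, and Randal-Williams supply the range directly for open manifolds and improved ranges in certain closed cases. Finally, removing the orientability hypothesis requires the factorization-homology computation of Knudsen, which produces the same rational stability range without invoking Poincar\'e duality in oriented form. Assembling these inputs across the open and closed cases yields the stated uniform bound for every connected manifold with finite-dimensional rational cohomology.
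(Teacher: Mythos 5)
Your proposal is correct and follows essentially the same route the paper relies on: the paper offers no independent proof of this theorem but assembles it from the cited literature, namely the transfer isomorphism $H^i(\Conf^n X;\Q)\cong H^i(\PConf^n X;\Q)^{S_n}$ together with Church's representation stability (giving the range $n>i$ for connected orientable manifolds of finite type) and Knudsen's factorization-homology argument to remove orientability. Your sketch accurately reconstructs how those ingredients combine, so there is nothing to add.
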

\subsection{Stable and unstable Betti numbers of configuration spaces}

Theorem \ref{stability} gives an important characterization of the rational cohomology of configuration spaces of manifolds, but a more fundamental question is ``given a manifold $X$, what are the Betti numbers of $\Conf^n(X)$?'' While there are several theoretical tools one can use to explicitly compute these Betti numbers (see \cite{CohenTaylor1978Springer, Totaro1996, FultonMacPherson1994, Kriz1994, FelixTanre2005, FelixThomas2000, FelixThomas2004, Knudsen2015, McDuff1975, BenderskyGitler1991, FadellNeuwirth1962}), surprisingly few have been computed. Using the theoretical framework of the Cohen--Taylor--Totaro--Kriz spectral sequence, we implemented an algorithm in Sage \cite{sage} to compute Betti numbers of unordered configuration spaces of connected, oriented, even-dimensional, closed manifolds of finite type (see the appendix for tables of our computations). As examples of our approach, in Section 4, we compute all the Betti numbers of $\Conf^n\C\P^1,$ $\Conf^n\C\P^2,$ and $\Conf^n\C\P^3$. Most of these were previously known, with the exception of the unstable Betti numbers of $\Conf^n\C\P^3.$ See Section 4 for remarks on methods used in previous work.

One of the first phenomena we empirically observed was the structure of the stable Betti numbers of the unordered configuration space of the closed genus 1 Riemann surface, $\Sigma_1$. At the time, very few Betti numbers of configuration spaces had been computed, and we were uncertain as to whether or not we would observe any structure in these values. We remark that the cohomology of $\Conf^n \Sigma_1$, is equivalently the cohomology of the elliptic braid group, $B_n(\Sigma_1)$ \cite{Birman1969, Scott1970}.

\begin{proposition}
\label{stableg1betti}
The stable Betti numbers of $\Conf^n\Sigma_1$ are $$b_0 = 1, b_1 = 2, b_2 = 3, b_3 = 5, b_4 = 7, \hdots, b_i = 2i-1, \hdots.$$
\end{proposition}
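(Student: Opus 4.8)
The plan is to reduce the statement to a single stable computation and then evaluate it with the Cohen--Taylor--Totaro--Kriz (CTTK) spectral sequence that underlies the paper's algorithm. By Theorem \ref{stability}, for each $i$ the number $b_i$ is well-defined and equals $\dim_\Q H^i(\Conf^n \Sigma_1;\Q)$ for every $n \ge i+1$, so it suffices to compute $H^i(\Conf^n\Sigma_1;\Q)$ in the stable range and exhibit its dimension as a function of $i$.

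First I would write down Totaro's $E_1$-model for $H^*(\PConf^n\Sigma_1;\Q)$: the bigraded CDGA generated by $H^*(\Sigma_1;\Q)^{\otimes n}$ together with classes $G_{jk}$ ($1 \le j < k \le n$) of cohomological degree $d-1 = 1$ (here $d = \dim\Sigma_1 = 2$), subject to $G_{jk}^2 = 0$, the three-term Arnold relations, and $\pi_j^*(c)\,G_{jk} = \pi_k^*(c)\,G_{jk}$, with differential $d(G_{jk}) = \Delta_{jk}$, where $\Delta$ is the class of the diagonal expanded in its K\"unneth components. Writing $a,b$ for a basis of $H^1(\Sigma_1;\Q)$ and $\omega = ab$ for the fundamental class, the diagonal class is
$$
\Delta = \omega\otimes 1 + 1 \otimes \omega + (a\otimes b - b\otimes a)
$$
in the two relevant tensor factors (with the standard Koszul signs). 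A model for the unordered space $H^*(\Conf^n\Sigma_1;\Q)$ is then the $S_n$-invariant subcomplex, with $S_n$ permuting the tensor factors and the indices of the $G_{jk}$.

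The key simplification is that $\chi(\Sigma_1) = 0$: the self-diagonal contribution that, for $S^2$ or $\C\P^m$, forces the $n$-dependent behavior obstructing integral stability is absent here, so I expect the invariant spectral sequence to degenerate quickly and the stable cohomology to be read off from the $S_n$-invariants of the $E_2$-page. I would identify explicit stable generators --- symmetrized monomials in $a$, $b$, $\omega$ across the points, together with symmetrized products of the degree-$1$ classes $G_{jk}$ --- organize them by total cohomological degree, and count; note that the two symmetrized degree-$1$ classes coming from $a$ and $b$ already account for $b_1 = 2$. The target is to show the stable Poincar\'e series equals
$$
\sum_{i\ge 0} b_i\, t^i = \frac{1+t^3}{(1-t)^2},
$$
whose coefficients are $1, 2, 3, 5, 7, \dots$ with $b_i = 2i-1$ for $i \ge 2$, matching the claim.

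The main obstacle is the $S_n$-invariant bookkeeping: controlling the image and kernel of the diagonal differential after symmetrization, handling the Arnold relations compatibly with the $S_n$-action, and proving the count $b_i = 2i-1$ for \emph{all} $i$ rather than the finitely many values observed in Sage. I would attack this either by induction on $i$ using the stability isomorphisms of Theorem \ref{stability}, or by establishing the generating-function identity above directly and confirming it against the closed-form surface formulas of Drummond-Cole--Knudsen \cite{Drummond-ColeKnudsen2016}; the latter also serves as an independent check on the computation.
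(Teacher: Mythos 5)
Your reduction to the stable range via Theorem \ref{stability} is fine, and your target generating function $\frac{1+t^3}{(1-t)^2}$ does have coefficients $1,2,3,5,7,\dots,2i-1,\dots$, so the endpoint is correctly identified. But the proposal has a genuine gap at its center: the actual computation of the cohomology of the invariant complex is never carried out --- you name it as ``the main obstacle'' and offer two unexecuted strategies (induction on $i$, or verifying the generating function against Drummond-Cole--Knudsen, which is a consistency check rather than a proof). Worse, the one structural shortcut you offer is false. The vanishing of $\chi(\Sigma_1)$ does \emph{not} make the spectral sequence degenerate at $E_2$, nor can the stable cohomology be ``read off from the $S_n$-invariants of the $E_2$-page.'' The differential sends $G_{jk}$ to the full diagonal class, whose K\"unneth components $1\otimes\omega+\omega\otimes1+a\otimes b-b\otimes a$ survive symmetrization regardless of the Euler characteristic; in the paper's model this is the relation $d\1 = 2T-2AB$ (up to the terms involving $\A,\B$), and the paper's tables show the differential has rank $q$, $2q$, and $q$ on $E_2^{q-1,q}$, $E_2^{q,q}$, and $E_2^{q+1,q}$ respectively in the stable range. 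If you ignored the differential you would get, e.g., $b_2=\dim E_2^{1,1}+\dim E_2^{2,0}=4+2=6$ rather than $3$. What $\chi=0$ buys is unrelated to rational degeneration; the collapse after one nontrivial differential (Theorem \ref{T:1D}) holds for every even-dimensional manifold and still leaves that one differential to be computed.

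There is also a methodological mismatch worth flagging: you start from the \emph{ordered} Totaro model, with generators $G_{jk}$, Arnold relations, and the relation $\pi_j^*(c)G_{jk}=\pi_k^*(c)G_{jk}$, and propose to take $S_n$-invariants by hand. The paper avoids exactly this bookkeeping by invoking Theorem \ref{T:E2} (F\'elix--Thomas, Church): the $S_n$-invariants are already identified with the free graded-commutative algebra on $V_X\oplus W_X$ truncated by length, with an explicit closed-form differential. That identification is what turns the problem into finite, bidegree-by-bidegree linear algebra --- listing the bases $B^{p,q}(n)$, computing $d$ of each monomial, and counting ranks for all $q$ at once --- which is how the paper actually establishes $b_i=2i-1$ for every $i$. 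To repair your proposal, either adopt that symmetrized model and carry out the rank computation in each bidegree, or supply the invariant-theoretic argument that your symmetrized $G_{jk}$-monomials span the invariants modulo the Arnold relations; as written, neither the generators nor the ranks are pinned down, so no value of $b_i$ is actually proved.
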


We give the full rational cohomology of $\Conf^n\Sigma_1$, i.e. also the unstable Betti numbers, in Section 4 (see Proposition \ref{P:g1SV}). Prior to our work, Napolitano \cite[Table 2]{Napolitano2003} had computed $H_i(\Conf^n\Sigma_1;\Z)$ for $1 \leq n \leq 6$ and $0 \leq i \leq 7$, and Kallel \cite[Corollary 1.7]{Kallel2008} computed $H_1(\Conf^n\Sigma_1; \Z)$ for $n \geq 3.$ Concurrently with our work, Scheissl \cite{Scheissl2016} independently computed the stable and unstable Betti numbers of $\Conf^n\Sigma_1$ also using the Cohen--Taylor--Totaro--Kriz spectral sequence, and Drummond-Cole and Knudsen \cite[Corollaries 4.5--4.7]{Drummond-ColeKnudsen2016} not only computed the stable and unstable Betti numbers of $\Conf^n\Sigma_1$, but did so for all surfaces of finite type via a method derived from factorization homology. For other related computations, see \cite{BrownWhite1981, BodigheimerCohen1988, BodigheimerCohenTaylor1989, Knudsen2015, Azam2015}.

In the appendix, we provide tables of Betti numbers we've computed for unordered configuration spaces of the following manifolds: $\C\P^1 \times \C\P^1$, $\C\P^1 \times \C\P^1 \times \C\P^1$, $\C\P^1 \times \C\P^2$, $\P_{\C\P^2}(\O \oplus \O(1))$, $\C\P^3$, $\C\P^4$, $\C\P^5$, $\C\P^6$, $\Sigma_1$, $\Sigma_1 \times \C\P^1$, $\Sigma_2$, $\Sigma_3$, and $\Sigma_4$ (where $\Sigma_g$ is the clsoed genus $g$ Riemann surface). For $X = \C\P^1 \times \C\P^2$ and $Y = \P_{\C\P^2}(\O\oplus \O(1))$, Totaro \cite[Section 5]{Totaro1996} pointed out that $\Conf^3X$ and $\Conf^3Y$ have different rational cohomology. In \cite[Section 1.19]{VakilWood2015}, Vakil and Wood ask if this difference goes away in the stable range. However, from tables \ref{Tab:cp1xcp2one} and \ref{Tab:cp1xcp2Tone}, we see that $\Conf^nX$ and $\Conf^nY$ do have significantly different rational cohomology; specifically, in the stable range, $H^{11}(\Conf^{15}X; \Q) \neq H^{11}(\Conf^{15}Y; \Q)$ and $H^{12}(\Conf^{15}X; \Q) \neq H^{12}(\Conf^{15}Y; \Q).$

For other computational results not discussed above, the interested reader should consult \cite{Cohen1973, CohenTaylor1978Springer, FultonMacPherson1994, BodigheimerCohenTaylor1989, FelixThomas2000, Napolitano2003, Azam2015, Knudsen2015, Fuks1970, BrownWhite1981, CohenTaylor1978BullAmer, Vainshtein1978, Kallel2008, Vershinin1999, FeichtnerZiegler2000, DominguezGonzalezLandweber2013, Sohail2010, BodigheimerCohen1988}.

\subsection{Stable instability and vanishing}
Traditional stability is the statement that $H^i(X_n;\Q) \approx H^i(X_{n+1};\Q)$ for $i$ fixed and $n$ sufficiently large. But one might reasonably ask about other relationships between the cohomology groups, i.e. stability where neither $i$ nor $n$ is fixed, but grow in some other way. For example, Miller and Wilson \cite[Theorem 1.2]{MillerWilson2016} have recently proven stability phenomenon following from maps the form $H^i(\PConf^nX;\Q) \rightarrow H^{i+1}(\PConf^{n+2}X;\Q)$ for some $X$. 

Additionally, Church, Farb, and Putman \cite{ChurchFarbPutman2014} have made conjectures about the unstable cohomology of $\{X_n\} = \{\SL_n(\Z)\}, \{\Mod_n\}$, where $\Mod_n$ is the mapping glass group of the genus $n$ closed Riemann surface. Specifically, if we let $D_n$ denote the virtual cohomological dimension of $X_n$, they conjecture:

\begin{itemize}
\item (Stable instability Conjecture) For $j\geq 0$, the group $H^{D_n-j}(X_n;\Q)$ does not depend on $n$ for $n$ sufficiently large, and
\item  (Vanishing Conjecture) For each $i\geq 0$, we have $H^{D_n-j}(X_n;\Q)=0$ for $n$ sufficiently large.
\end{itemize}

\begin{figure}[h]
\includegraphics[width=4.5in]{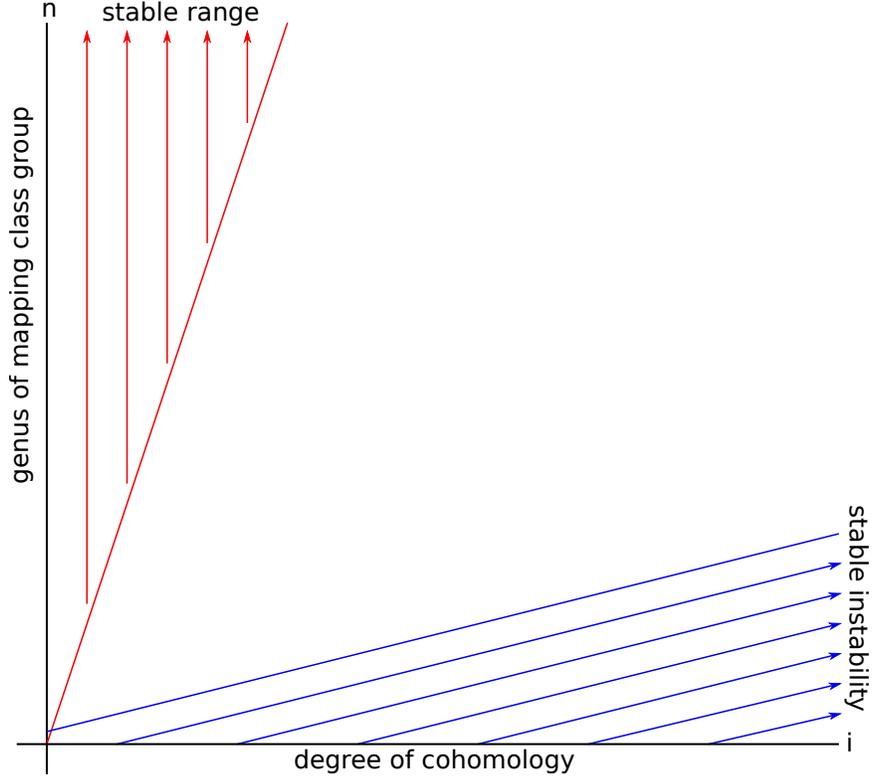}
\caption{Betti numbers of the mapping class group of genus $n$; $\dim H^i(\Mod_n;\Q)$ sits at the point $(i,n).$ Red lines illustrate traditional cohomological stability. Blue lines illustrate stable instability.}
\label{fig:mod_n}
\end{figure}

In particular, they prove that in these two cases, the stable instability conjecture implies the vanishing conjecture. 

We observe similar phenomena occuring in the cohomology of unordered configuration spaces of manifolds. As an example, our approach gives a simple proof of the following theorem which follows from a result first proved by Kallel \cite[Theorem 1.1]{Kallel2008} and was also proven by Napolitano \cite[Theorem 3]{Napolitano2003} in the case when $X$ is any connected surface.

\begin{theorem}[Stable Instability and  Vanishing Theorem, \cite{Kallel2008}]\label{T:SIVC}
Let $X$ be an oriented real manifold of dimension $D$. Then for $n\geq j+2$, we have
$$
H^{nD-j}(\Conf^n X;\Q)=0.
$$
\end{theorem}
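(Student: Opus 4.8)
The plan is to deduce the statement for the unordered space from a cohomological dimension bound on the ordered space. Since $\Q$ is a field of characteristic zero and $S_n$ acts freely on $\PConf^n X$, transfer gives $H^{nD-j}(\Conf^n X;\Q)\cong H^{nD-j}(\PConf^n X;\Q)^{S_n}$, so it suffices to prove that $H^t(\PConf^n X;\Q)=0$ for every $t> nD-(n-1)$; setting $t=nD-j$ this is exactly the range $j\le n-2$, i.e. $n\ge j+2$. I would in fact prove the slightly stronger statement $P(n)$: for every $\Q$-local system $\mathcal{L}$ on $\PConf^n X$ one has $H^t(\PConf^n X;\mathcal{L})=0$ for $t>nD-(n-1)$, since the inductive step forces twisted coefficients on the base.

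The induction runs along the Fadell--Neuwirth tower. Forgetting the last point gives a locally trivial fibration $\PConf^n X\to \PConf^{n-1}X$ whose fiber over a configuration is $F=X\setminus\{n-1\text{ points}\}$, a noncompact $D$-manifold and hence homotopy equivalent to a CW complex of dimension at most $D-1$; in particular $H^q(F;\mathcal{M})=0$ for $q\ge D$ and any local system $\mathcal{M}$. I would feed this into the Leray--Serre spectral sequence with local coefficients
\[
E_2^{p,q}=H^p\!\big(\PConf^{n-1}X;\,\mathcal{H}^q(F)\big)\ \Longrightarrow\ H^{p+q}(\PConf^n X;\mathcal{L}),
\]
where $\mathcal{H}^q(F)$ is the local system $x\mapsto H^q(F_x;\Q)$ twisted by $\mathcal{L}$. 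By the inductive hypothesis $E_2^{p,q}=0$ for $p>(n-1)D-(n-2)$, and by the fiber bound $E_2^{p,q}=0$ for $q\ge D$; hence every nonzero entry has $p+q\le \big[(n-1)D-(n-2)\big]+(D-1)=nD-(n-1)$, which gives $P(n)$. The base case $P(1)$ is the statement that $H^t(X;\mathcal{L})=0$ for $t>D$, which holds because any $D$-manifold has cohomological dimension at most $D$.

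The main things to verify carefully are the two dimension inputs and the applicability of the spectral sequence. The fiber bound uses that an open manifold of dimension $D$ has cohomological dimension at most $D-1$ over any coefficient system --- the standard fact that such a manifold admits a handle or CW decomposition with no top-dimensional cell; one must also note that for $D\ge 2$ the fiber remains connected after removing finitely many points, though only the dimension bound is actually needed. The use of Leray--Serre with nontrivial local coefficients is legitimate for an arbitrary fibration, which is why I carry arbitrary $\mathcal{L}$ through the induction rather than just constant $\Q$. I expect the genuine subtlety to lie not in any single step but in the bookkeeping of the twist: the local system $\mathcal{H}^q(F)$ on the base is generally nontrivial, since the monodromy of $\pi_1(\PConf^{n-1}X)$ permutes the punctures of $F$, so the inductive hypothesis must be stated for all local systems from the outset. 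Finally I would remark that this argument never uses orientability of $X$, so it in fact proves the theorem for arbitrary manifolds of dimension $D$; alternatively, via Poincar\'e--Lefschetz duality the same vanishing is equivalent to the statement that $H^{BM}_j(\PConf^n X)$ vanishes for $j\le n-2$, which the Fadell--Neuwirth tower establishes in the dual form.
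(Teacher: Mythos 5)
Your argument is correct, but it is a genuinely different proof from the one in the paper. The paper works entirely inside the Cohen--Taylor--Totaro--Kriz spectral sequence: after quotienting the $E_2$-page by the exact subcomplex of Proposition \ref{top} (which forces the exponent of the orientation class to be at most $1$ in the $V_X$-part and $0$ in the $W_X$-part), the length constraint $\ell(Y^{\r}\Y^{\s})=2q+\sum_i r_i\leq n$ bounds the cohomological degree of any surviving monomial by $(n-q)(D-1)+1$, and summing over the diagonal $p+(D-1)q=i$ gives the vanishing for $i\geq n(D-1)+2$. That argument is purely algebraic but inherits the hypotheses under which the spectral sequence machinery of Section 2 is available (connected, oriented, closed, even-dimensional, finite type). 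You instead reduce to $\PConf^nX$ by transfer and induct along the Fadell--Neuwirth fibrations, with the single geometric input that a noncompact $D$-manifold has cohomological dimension at most $D-1$ for arbitrary local coefficients; the Leray--Serre bookkeeping then yields $H^t(\PConf^nX;\Q)=0$ for $t>nD-(n-1)$, which is exactly the stated range. Your route is closer to the classical proofs (and to Kallel's duality formulation), and it buys genuine extra generality: it needs neither orientability, nor evenness of $D$, nor compactness of $X$ --- indeed the paper's proof, read literally, only covers the closed oriented even-dimensional finite-type case even though the theorem is stated more broadly. What the paper's approach buys in exchange is that the exact subcomplex $\mathbf{A}(n)$ is reused throughout Sections 3--4, so the vanishing comes essentially for free once that subcomplex is in place, and the same counting immediately shows the bound is attained on specific $E_\infty^{p,q}$ entries (as in Proposition \ref{P:gCD}). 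The only points in your write-up that deserve an explicit citation or lemma are the fibration property of the forgetful maps and the fact that an open smooth $D$-manifold is homotopy equivalent to a complex of dimension at most $D-1$ (equivalently, $H^{BM}_0$ of a connected noncompact manifold vanishes); both are standard, and you correctly flag that the induction must carry arbitrary local systems because the monodromy on the punctured fiber is nontrivial.
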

Said another way, for $i\geq (D-1)n+2$, we have
$$
H^{i}(\Conf^n X;\Q)=0.
$$
For example, when $X$ is an oriented, compact surface, we see that the only possibly non-zero unstable cohomology is $H^i(\Conf^n X;\Q)$ for $i=n,n+1$.

In general, the vanishing in Theorem~\ref{T:SIVC} is sharp, as demonstrated by the following proposition.

\begin{proposition}\label{P:gCD}
Let $\Sigma_g$ be a closed Riemann surface of genus $g$ with $g\geq 1$.  Then for $n\geq 3$,
$$
H^{n+1}(\Conf^n \Sigma_g; \Q)\ne 0.
$$
\end{proposition}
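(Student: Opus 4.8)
The plan is to compute inside the Cohen--Taylor--Totaro--Kriz model, which is the computational engine of this paper. Since $\Sigma_g$ is a smooth complex projective curve, Totaro's degeneration result identifies $H^*(\PConf^n\Sigma_g;\Q)$ with the cohomology of an explicit differential graded algebra $(E,d)$: it is generated by the classes of $H^*(\Sigma_g)$ placed in each of the $n$ tensor factors together with degree-one diagonal generators $G_{ij}$, subject to the Arnold relations and $G_{ij}\,\pi_i^*a=G_{ij}\,\pi_j^*a$, with differential $d(G_{ij})=\pi_{ij}^*[\Delta]$, where $[\Delta]=\omega_i+\omega_j+\sum_{k=1}^g(\alpha_{k,i}\beta_{k,j}-\beta_{k,i}\alpha_{k,j})$ and $\omega$ is the fundamental class. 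Passing to $S_n$-invariants, $H^{n+1}(\Conf^n\Sigma_g;\Q)=H^{n+1}\big((E,d)^{S_n}\big)$. By Theorem~\ref{T:SIVC} the cohomology of this complex is concentrated in degrees $\le n+1$, so it suffices to produce a single invariant cocycle in degree $n+1$ that is not a coboundary.

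To build the cocycle I would first record the degree bookkeeping: each $G_{ij}$ contributes degree one, so a nonzero square-free $G$-monomial is a forest and contributes at most $n-1$ (a spanning tree). Hence every degree-$(n+1)$ monomial also carries cohomology classes of total degree $\ge 2$, and the top layer consists of a spanning tree $T$ decorated by one fundamental class; since $T$ is connected, the relation $G_{ij}\,\pi_i^*a=G_{ij}\,\pi_j^*a$ lets me slide $\omega$ to any vertex, so $m_T:=(\prod_{e\in T}G_e)\,\bar\omega$ is well defined. The natural invariant candidate is a signed sum $M=\sum_T\epsilon(T)\,m_T$ over labeled spanning trees. A direct computation gives $d(m_T)=\sum_{e\in T}\pm(\prod_{e'\in T\setminus e}G_{e'})\,\omega_{C}\omega_{C'}$ in degree $n+2$, indexed by the two-component forest $T\setminus e$ with a fundamental class on each component $C,C'$. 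Each such decorated forest is produced by the $|C|\,|C'|$ trees that reconnect its two components, so closedness of $M$ reduces to the vanishing, for every such forest, of a signed count of reconnecting edges; I expect this to pin down the orientation $\epsilon(T)$ and, where it does not vanish outright, to be cancelled by correction terms one $G$-level lower that carry the symplectic classes $\alpha_k,\beta_k$. This is exactly where $g\ge 1$ is essential: the corrections, and ultimately the nontriviality of the class, require $H^1(\Sigma_g)\ne 0$ (for $g=0$ the analogous class must become exact, consistently with $H^{n+1}(\Conf^n S^2;\Q)$ not being forced nonzero). The hypothesis $n\ge 3$ enters through the forest combinatorics, since for $n=2$ the top layer is a single edge and the construction degenerates.

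The main obstacle is non-exactness: showing that the (corrected) class $M$ is not in the image of $d$. I would establish this by pairing against the Poincaré-dual class. Because $\dim\Sigma_g$ is even the $S_n$-action on $\PConf^n\Sigma_g$ is orientation preserving, so $\Conf^n\Sigma_g$ is an oriented open $2n$-manifold and $H^{n+1}(\Conf^n\Sigma_g;\Q)\cong H^{BM}_{n-1}(\Conf^n\Sigma_g;\Q)$. I would then exhibit an explicit Borel--Moore $(n-1)$-cycle built from a nonseparating simple closed curve $\gamma\subset\Sigma_g$ (which exists precisely because $g\ge 1$): the configurations of $n$ points of $\gamma$ one of which is pinned at a basepoint form an open $(n-1)$-simplex, properly embedded since collisions exit $\Conf^n\Sigma_g$. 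Non-separation of $\gamma$ is what makes this cycle homologically essential, and computing its pairing with $M$ would close the argument. As consistency checks, the resulting value can be matched against the complete computation of $H^*(\Conf^n\Sigma_1;\Q)$ in Proposition~\ref{P:g1SV} and against the tabulated Betti numbers for $\Sigma_2,\Sigma_3,\Sigma_4$.
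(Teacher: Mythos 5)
Your plan is a genuinely different route from the paper's, but as written it has two unproven steps at its core, each of which is where all the real work would lie. First, closedness of your candidate class $M=\sum_T\epsilon(T)m_T$ is not established: you say you ``expect'' the signs to be pinned down and the leftover terms to be ``cancelled by correction terms one $G$-level lower that carry the symplectic classes $\alpha_k,\beta_k$,'' but those correction terms are never constructed, and their own differentials (which again hit the diagonal class, including its $\sum_k(\alpha_{k,i}\beta_{k,j}-\beta_{k,i}\alpha_{k,j})$ part) would have to be controlled in turn. Since $d(G_{ij})$ contains the $H^1\otimes H^1$ part of the diagonal and not just $\omega_i+\omega_j$, the claim that $d(m_T)$ is supported only on two-component forests decorated by fundamental classes is already incomplete. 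Second, the non-exactness argument is only gestured at: you would need to (a) verify that the properly embedded $(n-1)$-cell built from a nonseparating curve represents a nonzero Borel--Moore class, and (b) actually compute its pairing with $M$, which requires translating between the algebraic Kriz-model representative and a geometric cocycle. Neither computation is carried out, so the proposal is a research program rather than a proof.

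For contrast, the paper's proof is a short dimension count in the $S_n$-invariant $E_2$-page (after quotienting by the exact subcomplex of Proposition~\ref{top}): in the bidegree contributing to $H^{n+1}$, the outgoing differential vanishes because its targets contain $T^2$, and the incoming differential has rank at most the dimension of its source, so
$\dim E_\infty^{k+2,k-1}(2k)\geq 2g\binom{k+2g-2}{2g-1}-\binom{k+2g-1}{2g-1}=(k-1)\binom{k+2g-2}{2g-2}>0$
(with an even easier count when $n$ is odd). No explicit cocycle or dual cycle is needed. If you want to salvage your approach, the honest comparison is that an explicit generator of $H^{n+1}$ would be more informative than the paper's existence argument, but you would need to supply the full correction-term construction and the pairing computation; alternatively, note that once you are working in the invariant CTTK model anyway, bounding $\operatorname{rank} d$ by the dimension of its source makes the whole construction unnecessary.
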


In  many cases, for example in the case of $\Sigma_1$, the only alternate stability that we observe stabilizes to zero. However, in some examples, we do see some \emph{non-vanishing stable instability}. Specifically, for some values of $i$ (depending on $n$), we have that $H^i(\Conf^n\C\P^3;\Q) \approx H^{i+2}(\Conf^{n+1}\C\P^3;\Q) \neq 0$ for $n$ sufficiently large. (See Figure \ref{fig:cp3} for an illustration of stable instability.)

\begin{proposition}[Stable instability for $\C\P^3$]\label{cp3stableinstability} For $n \geq \frac{i}{2},$ $$H^i(\Conf^n \C\P^3;\Q) = H^i(\Conf^{n+1}\C\P^3; \Q);$$ i.e. the stable range of $\Conf^n \C\P^3$ is $n \geq \frac{i}{2}.$ Furthermore, for all $n \geq 11$ and $i > 2n$,  $$H^{i}(\Conf^n\C\P^3; \Q)  = H^{i+2}(\Conf^{n+1}\C\P^3; \Q).$$ Specifically, $$H^{2n+j}(\Conf^n\C\P^3; \Q) = H^{2(n+1)+j}(\Conf^{n+1}\C\P^3;\Q) = \Q$$ for $n \geq 11$ and $j \in \{2,4,6,7,8,10,12\}$, $$H^{2n+j}(\Conf^n\C\P^3; \Q) = H^{2(n+1)+j}(\Conf^{n+1}\C\P^3;\Q) = \Q^2$$ for $n \geq 10$ and $j \in \{1,3,5\},$
and $$H^{2n+j}(\Conf^n\C\P^3; \Q) = H^{2(n+1)+j}(\Conf^{n+1}\C\P^3;\Q) = 0$$ for $n \geq 1$ and $j \in \N - \{1, \hdots, 8,10,12\}.$
\end{proposition}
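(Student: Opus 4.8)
The entire statement concerns a single explicit finite complex, so the plan is to feed the cohomology ring $H^*(\C\P^3;\Q)=\Q[x]/(x^4)$, with $\deg x=2$, into the Cohen--Taylor--Totaro--Kriz model. Concretely, one takes generators $x_i$ of degree $2$ (the class $x$ placed at the point $i$, for $1\le i\le n$) together with edge classes $G_{ij}$ of degree $5$ for $1\le i<j\le n$, subject to the Arnold relations, with $d(x_i)=0$ and $d(G_{ij})=\sum_{a=0}^{3}x_i^{a}x_j^{3-a}$ (the restriction of the diagonal class of $\C\P^3$); the cohomology of the $S_n$-invariants of this differential graded algebra is $H^*(\Conf^n\C\P^3;\Q)$. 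Bigrade by the number of edges $p$ and the $x$-degree $q$, so that the cohomological degree is $i=q+5p$ and $d$ has bidegree $(p,q)\mapsto(p-1,q+6)$. This is exactly the complex our Sage program evaluates; the task is therefore to promote the numerical patterns in the tables to statements valid for all large $n$.

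For the ordinary stable range I would use the degree-preserving chain map $C^*(\Conf^n\C\P^3)\to C^*(\Conf^{n+1}\C\P^3)$ that relabels a monomial on $n$ points as the same monomial on $n+1$ points, with the new point carrying the fundamental class $1=x^0$ and no edges. Since the smallest positive-degree generator $x$ has degree $2$, any monomial of total degree $i$ involves at most $i/2$ points that either carry a positive power of $x$ or meet an edge; hence for $n\ge i/2$ every class has an inert $x^0$-labelled point to strip off, and the stabilization map is a cohomology isomorphism. This both produces the range and explains why the slope is $\tfrac12$ rather than the slope $1$ seen for a surface $\Sigma_g$, whose degree-$1$ classes make active points cheaper.

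For the stable instability I would instead use the degree-raising map that labels the new point with $x$ itself, a chain map $C^*(\Conf^n\C\P^3)\to C^{*+2}(\Conf^{n+1}\C\P^3)$ that sends $(n,i)\mapsto(n+1,i+2)$ and so automatically preserves $j=i-2n$. In the unstable window $i=2n+j$ the balance $2\sum_\ell c_\ell+5p=2n+j$ forces $\sum_\ell c_\ell=n+\tfrac{j-5p}{2}$, so contributions split into near-uniform $x^1$-labellings (small $p$, deviating from the all-$x^1$ configuration by a bounded defect) and edge-heavy configurations (large $p$, many points carrying $x^0$). The real content of the proposition is that only the bounded-defect part survives in cohomology, and that this finite defect complex is independent of $n$ once $n$ is large; the explicit values $\Q$, $\Q^2$, and $0$ are then its cohomology, equivalently the values read off the Sage output at the threshold ranks $n=10,11$, which also pin down the sharp onset of stability and the vanishing for $j\notin\{1,\dots,8,10,12\}$.

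The main obstacle is precisely the verification that these stabilization maps are isomorphisms in the asserted ranges. Because $d(G_{ij})=\sum_a x_i^a x_j^{3-a}$ mixes the point labels---its $a=0$ and $a=3$ terms create $x^0$ labels---the naive count of ``active'' points is not preserved on chains, so that both the cancellation of the edge-heavy contributions and the surjectivity of the degree-raising map (every unstable class admitting a representative with the required inert $x$-point) must be argued on the CTTK spectral sequence rather than term by term, and compatibly with the $S_n$-invariants. Controlling these differentials, and thereby certifying the exact thresholds $n\ge 11$ and $n\ge 10$, is the technical heart of the argument; once the stabilization isomorphisms are established, the full list of Betti numbers follows from a single finite base-case computation.
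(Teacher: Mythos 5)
Your proposal does not close the argument. The paper's proof of Proposition~\ref{cp3stableinstability} is a direct computation: it writes down explicit bases $B^{2p,q}(n)$ of the $S_n$-invariant model $E_2^{2p,q}(n)$ for $q=0,1,2,3$, computes the image of each basis vector under $d$, tabulates $\dim E_2^{2p,q}(n)$ and $\rank d^{2p,q}$ as explicit functions of $p$ and $n$ in every range, and then reads off $\dim E_\infty^{2p,q}(n)$ for \emph{all} $n$ and $p$ from the formula $\dim E_\infty^{2p,q}(n)=\dim E_2^{2p,q}(n)-\rank d^{2p,q}-\rank d^{2p-6,q+1}$. Both the stable range $n\ge i/2$ and the entire stable-instability pattern (including the thresholds $n\ge 10,11$ and the values $\Q,\Q^2,0$) are consequences of these closed-form counts; nothing is extrapolated from finitely many machine computations. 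Your route is genuinely different in conception --- you want two stabilization chain maps (adjoining a point labelled $1$, resp.\ labelled $x$) and then to prove they induce isomorphisms on cohomology in the appropriate ranges --- and if carried out it would be a more conceptual explanation of \emph{why} the slope-$2$ periodicity occurs. But as written it is a plan, not a proof.

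The concrete gap is the one you yourself flag: you never establish that either stabilization map is an isomorphism on $E_\infty$ (equivalently on $H^*(\Conf^n\C\P^3;\Q)$, since the spectral sequence degenerates after one differential). For the degree-preserving map, an identification of cochain groups in a single degree does not give an isomorphism on cohomology; you need to control the differentials into and out of that degree, and your own observation that $d(G_{ij})$ creates $x^0$-labels shows the naive ``inert point'' count is not preserved by $d$. For the degree-raising map (multiplication by $X_1$ in the invariant model), injectivity and surjectivity on $E_\infty$ in the window $i>2n$ is exactly the assertion that only your ``bounded-defect'' classes survive, and you give no argument for the required cancellation of the edge-heavy part. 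Finally, the specific unstable values $\Q$ and $\Q^2$ for $j\in\{2,4,6,7,8,10,12\}$ and $j\in\{1,3,5\}$, and the vanishing for all other $j$, are asserted to be ``read off the Sage output at $n=10,11$''; without the stabilization isomorphisms this verifies the statement only for finitely many $n$, which is precisely what a proof must avoid. To complete your approach you would need either to prove the two maps are quasi-isomorphisms in the stated ranges (e.g.\ by an explicit contraction of the complementary subcomplex, in the spirit of the paper's Proposition~\ref{top}), or to fall back on the paper's uniform-in-$n$ dimension and rank formulas.
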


Our bound for the stable range of $\Conf^n \C\P^3$ improves slightly on the bound of $n \geq \frac{i}{2} + 1$ given by Church \cite[Proposition 4.1]{Church2012}.

Our computations (see tables \ref{Tab:cp3one}--\ref{Tab:cp6five} and propositions \ref{cp1betti}--\ref{cp3betti}) have led us to make the following conjecture about the top non-vanishing cohomology of $\Conf^n\C\P^k$ for all $k \geq 1$, which we also believe to be an example of non-vanishing stable instability.

\begin{conjecture}[Non-vanishing stable instability for $\C\P^k$] \label{stableinstability}
Given a positive integer $k$, there exist positive integers $n_0$ and $c_s$ such that, for $n \geq \frac{i-c_s}{2\lceil k/2\rceil -2},$ $$H^i(\Conf^n\C\P^k;\Q) = H^i(\Conf^{n+1}\C\P^k;\Q),$$  and for all $n \geq n_0$ and $i > (2\lceil k/2\rceil -2)n + c_s,$ $$H^i(\Conf^n\C\P^k;\Q) \approx H^{i + 2\lceil k/2 \rceil -2}(\Conf^{n+1}\C\P^k; \Q).$$ Specifically, there exists $c_t$ such that $$H^{(2\lceil k/2 \rceil -2)n + c_t}(\Conf^n\C\P^k;\Q) = H^{(2\lceil k/2 \rceil -2)(n+1) + c_t}(\Conf^{n+1}\C\P^k;\Q) = \Q$$ for all $n \geq n_0$, and $$H^{(2\lceil k/2 \rceil -2)n + j}(\Conf^n\C\P^k;\Q) = H^{(2\lceil k/2 \rceil -2)(n+1) + j}(\Conf^{n+1}\C\P^k;\Q) = 0$$ for all $n \geq n_0$ and $j > c_t.$
\end{conjecture}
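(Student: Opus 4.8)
The plan is to prove the conjecture by a refined analysis of the Cohen--Taylor--Totaro--Kriz spectral sequence, the same engine driving our computations. Since $\C\P^k$ is a smooth projective variety, Totaro's purity argument \cite{Totaro1996} guarantees that the spectral sequence degenerates at $E_2$, so that $H^*(\Conf^n\C\P^k;\Q)$ is the space of $S_n$-invariants in the cohomology of the Kriz model: the bigraded commutative differential graded algebra generated by classes $h^{(a)}$ of degree $2$ (one copy of $H^*(\C\P^k)=\Q[h]/(h^{k+1})$ for each point $a$) together with odd classes $G_{ab}$ of degree $2k-1$, with differential $d(G_{ab})=\sum_{i=0}^{k} h_a^{\,i}h_b^{\,k-i}$ (the class of the diagonal), the relation $h_aG_{ab}=h_bG_{ab}$, and the Arnold relations. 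Degeneration and purity mean that each cohomology group is concentrated in a single weight, so the Betti numbers are recorded \emph{exactly} by a Poincar\'e--Serre generating function rather than merely by an Euler characteristic. First I would make this invariant complex and its weight grading completely explicit.

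The main step is to produce, uniformly in $k$, a closed form for the bivariate generating function
\[
F_k(s,t)=\sum_{n\ge 0}\Big(\sum_i \dim_\Q H^i(\Conf^n\C\P^k;\Q)\,t^i\Big)s^n
\]
as a rational function of $s$ and $t$. The stable part of the algebra is free graded-commutative on a finite list of generators coming from the ``irreducible'' invariant cocycles built out of the $h^{(a)}$ and $G_{ab}$; assembling these, together with the finitely many unstable corrections, should express $F_k(s,t)$ as an explicit product over these generators. As a consistency check this must reproduce the known closed forms for $\C\P^1,\C\P^2,\C\P^3$ recorded in Propositions~\ref{cp1betti}--\ref{cp3betti} and agree with the tabulated data for $\C\P^4,\C\P^5,\C\P^6$. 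Once $F_k$ is in hand, the slope $2\lceil k/2\rceil-2$ is read off as the maximal degree-to-point ratio $b/a$ among the surviving generators contributing a factor $s^a t^b$, the dominant one having $a=1$ and $b=2\lceil k/2\rceil-2$. The primary stability statement (including the improved stable range $n\ge (i-c_s)/(2\lceil k/2\rceil-2)$, extending Proposition~\ref{cp3stableinstability} from $k=3$ to all $k$) is then that below this line the coefficients of $F_k$ are eventually independent of $n$, while the secondary diagonal isomorphism $H^i(\Conf^n)\approx H^{i+2\lceil k/2\rceil-2}(\Conf^{n+1})$ is that above the line the leading diagonal of $F_k$ is governed by a single pole of the form $1/(1-s\,t^{\,2\lceil k/2\rceil-2})$.

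To finish I would isolate the top of the unstable region: exhibit an explicit invariant cocycle realizing $H^{(2\lceil k/2\rceil-2)n+c_t}(\Conf^n\C\P^k;\Q)=\Q$ for $n\ge n_0$, and show that the corresponding $S_n$-invariants vanish in every higher degree along that family, which pins down $c_t$, $c_s$, and $n_0$. The hard part will be this middle computation: evaluating the $S_n$-invariants of the cohomology of the Kriz model in the \emph{unstable} range uniformly in $n$ and $k$. Unlike the stable range, where representation stability (Theorem~\ref{stability} and \cite{Church2012}) tames the invariants, here the diagonal differential mixes the even $h$-classes with the odd $G$-classes in a way whose invariant theory (a plethysm for $S_n$) has no evident closed form, and verifying that exactly one $\Q$ survives at the top with genuine vanishing above it---rather than the weaker vanishing guaranteed by Theorem~\ref{T:SIVC}---is precisely where the slope and the constants must be controlled. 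I would attack this by treating the two residue classes of $k$ modulo $2$ separately, since the appearance of $\lceil k/2\rceil$ strongly suggests that the parity of $k$ determines which top generator dominates.
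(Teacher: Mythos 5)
The statement you are addressing is a \emph{conjecture} in the paper: the author establishes it only for $k=1,2,3$, by explicitly enumerating bases of every $E_2^{p,q}(n)$ and computing every differential rank in those cases (Propositions \ref{cp1betti}--\ref{cp3betti} and Proposition \ref{cp3stableinstability}), and leaves the general case open. Your proposal does not close that gap. It is a strategy outline whose decisive step --- producing a closed-form rational generating function $F_k(s,t)$ uniformly in $k$, and controlling the $S_n$-invariants of the Kriz model in the unstable range well enough to show that exactly one class survives along the line of slope $2\lceil k/2\rceil-2$ and nothing survives above it --- is precisely the content of the conjecture, and you acknowledge yourself that you have no method for it (``no evident closed form''). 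Reading the slope off as the ``maximal degree-to-point ratio among the surviving generators'' presupposes that you already know which invariant cocycles survive the diagonal differential in the unstable range; nothing in the proposal supplies a mechanism for proving eventual periodicity of the unstable Betti numbers along lines of that slope, as opposed to observing it empirically in the tables for $k\leq 6$. As written, this is a research plan, not a proof, and it should be presented as such.

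Two smaller inaccuracies in the setup. First, the spectral sequence does not degenerate at $E_2$: Totaro and Kriz (Theorem \ref{T:1D}) show it has exactly one nontrivial differential --- the one you write down --- so $H^*(\Conf^n\C\P^k;\Q)$ is the $S_n$-invariant part of the \emph{cohomology} of the Kriz model, not of the $E_2$-page; and the individual groups $H^i$ are not pure (a class from $E_\infty^{p,q}$ has weight $p+2kq$, not $p+(2k-1)q=i$). The correct statement --- that degree and weight together determine $(p,q)$, so the Hodge--Deligne polynomial determines the Betti numbers --- is what you need and is true, but it is not what you wrote. Second, for $k=1$ the slope $2\lceil k/2\rceil-2$ equals $0$, so the inequality $n\geq (i-c_s)/(2\lceil k/2\rceil-2)$ is ill-formed and the ``dominant pole $1/(1-s\,t^{2\lceil k/2\rceil-2})$'' collapses to ordinary stabilization; any uniform-in-$k$ argument must treat this degenerate case separately, which is a warning that the generating-function heuristic needs more care than the sketch allows.
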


\begin{figure}[h]
\includegraphics[width=4.5in]{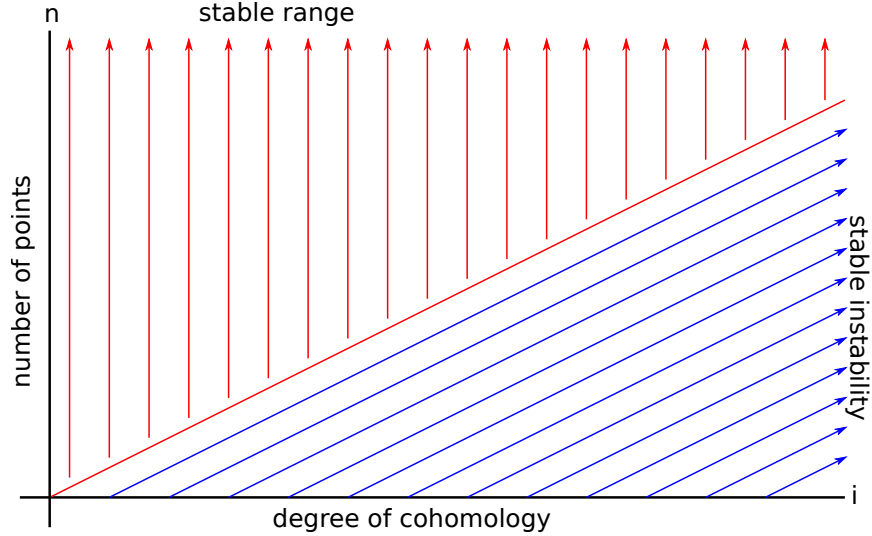}
\caption{Betti numbers of the unordered configuration space of $n$ points in $\C\P^3$; $\dim H^i(\Conf^n\C\P^3)$ sits at the point $(i,n).$ Red lines illustrate traditional cohomological stability. Blue lines illustrate stable instability.}
\label{fig:cp3}
\end{figure}

For $k = 3,$ Conjecture \ref{stableinstability} follows from Proposition \ref{cp3stableinstability}. In Section 4, we will see that Conjecture~\ref{stableinstability} is also true for $k=1,2$, and in particular that $H^3(\Conf^n\C\P^1;\Q) = \Q$ is the top non-vanishing cohomology of $\Conf^n\C\P^1$ and that $H^{11}(\Conf^n\C\P^2; \Q) = \Q$ is the top non-vanishing cohomology of $\Conf^n\C\P^2.$ 


\section{The Cohen--Taylor--Totaro--Kriz spectral sequence}

There are several theoretical tools that can be used to compute the cohomology of configuration spaces (for example, see \cite{CohenTaylor1978Springer, Totaro1996, FultonMacPherson1994, Kriz1994, FelixTanre2005, FelixThomas2000, FelixThomas2004, Knudsen2015, McDuff1975, BenderskyGitler1991, FadellNeuwirth1962}). But the one we use is a Leray spectral sequence for the fibration $\PConf^nX \hookrightarrow X^n$ converging to $H^*(\PConf^nX;\Q)$ whose $S_n$-invariants converge to $H^*(\Conf^nX;\Q)$ when $X$ is a connected, oriented manifold of finite type.

Cohen and Taylor \cite[Section 2]{CohenTaylor1978Springer} were the first to describe this spectral sequence (although they failed to identify it as Leray). They described the $E_2$-page and the first nontrivial differential for $X$ a connected, oriented manifold of finite type. Then Totaro \cite[Theorem 3]{Totaro1996} and Kriz \cite[Theorem 1.1]{Kriz1994} proved that the sequence collapses after the first nontrivial differential for $X$ a smooth, complex projective variety. F\'{e}lix and Thomas \cite[Theorem A]{FelixThomas2000} and Church \cite[Proposition 4.3]{Church2012v1} described the $S_n$-invariants of the $E_2$-page for $X$ even-dimensional. In the odd-dimensional case, Bodigheimer, Cohen, and Taylor \cite[Theorem C]{BodigheimerCohenTaylor1989} and F\'{e}lix and Tanr\'{e} \cite[Theorem 4]{FelixTanre2005} proved that $H^*(\Conf^nX; \Q) = \bigoplus_{i+j = n} \Sym^iH^{even}(X;\Q) \otimes \bigwedge^j H^{odd}(X;\Q).$ F\'{e}lix and Tanr\'{e} \cite[Theorem 3]{FelixTanre2005} and Knudsen\cite[Corollary 4.11]{Knudsen2015} proved that, in the even-dimensional case, the Cohen--Taylor--Totaro--Kriz spectral sequence only has one nontrivial differential for $X$ a connected manifold of finite type. 

We now give a description of what this spectral sequence looks like. Let $X$ be an even-dimensional manifold.  Let $V_X$ be the bi-graded vector space $\tilde{H}^*(X,\Q)$, with second grading identically $0$.  Let $W_X$ be the bi-graded vector space
$H^*(X;\Q)$ in which an element of degree $g$ from $\tilde{H}^*(X,\Q)$ has grade $(g,1)$.
Let $R_X=\Q[V_X\oplus W_X]$ be the free graded-commutative algebra on $V_X\oplus W_X$, in which the grading for the graded-commutativity is given by the sum of the two grades in the bigrading of $V_X\oplus W_X$.  We put a third grading, \emph{length}, on $R_X$ induced by giving elements of $V_X$ length $1$ and $W_X$ length $2$.

\begin{theorem}[\cite{CohenTaylor1978Springer, FelixThomas2000, Church2012v1}]\label{T:E2}
Let $X$ be an oriented even dimensional manifold.  Let $\mathcal{E}_2(n)$ be the $S_n$ invariants of the $E_2$ page of the Leray spectral sequence for $\PConf^nX \hookrightarrow X^n$, with the rows that are not indexed by multiples of $\dim X-1$ removed.  Then we have a isomorphism of bi-graded vector spaces
$$
\mathcal{E}_2(n) \isom R_{X,\leq n},
$$
where $R_{X,\leq n}$ is the quotient of $R_{X}$ be elements of length $>n$.  
\end{theorem}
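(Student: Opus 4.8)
The plan is to start from the description of the full (non-invariant) $E_2$ page due to Cohen and Taylor and then carry out the $S_n$-invariant computation by hand, following the strategy of F\'elix--Thomas and Church. First I would recall the Cohen--Taylor description of the $E_2$ page of the Leray spectral sequence of $\PConf^n X \hookrightarrow X^n$: as a bigraded algebra it is generated by classes $\alpha_i$ (for $\alpha \in \tilde H^*(X;\Q)$ and $1 \le i \le n$), placed in row $0$, together with ``graph classes'' $G_{ij}$ for $1 \le i < j \le n$, each of cohomological degree $D-1 = \dim X - 1$ and placed in row $D-1$. The relations are the product relations within a point, the diagonal identifications $\alpha_i G_{ij} = \alpha_j G_{ij}$, the symmetry $G_{ij} = (-1)^D G_{ji}$, and the Arnold (three-term) relations $G_{ij}G_{jk} + G_{jk}G_{ki} + G_{ki}G_{ij} = 0$; note that $G_{ij}^2 = 0$ automatically since $D-1$ is odd. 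Using the Arnold and diagonal relations to put monomials in a normal form, one obtains a basis indexed by \emph{decorated forests}: a set partition of $\{1,\dots,n\}$ into blocks, a spanning tree-class on each block (taken modulo Arnold), and a single cohomology label $\alpha \in H^*(X;\Q)$ on each block. The number of edges of the forest records the second (row) grading, and I would set up the bigrading exactly as in the statement, recording on each block its label degree and its edge count rather than the total cohomological degree.

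Next I would pass to $S_n$-invariants. Since $S_n$ permutes the points, it permutes the blocks of a decorated forest and acts within each block; the $E_2$ page therefore decomposes over set partitions, and a standard species/plethysm argument shows that the invariants form the free graded-commutative algebra on the invariants of the \emph{connected} pieces of each size. The length grading (each vertex contributing $1$) is precisely the number of points occupied by a monomial, and the unoccupied points carry the trivial label, so truncating at length $\le n$ corresponds exactly to the truncation $R_{X,\le n}$. Thus the theorem reduces to computing, for each $k \ge 1$, the $S_k$-invariants of the space of connected tree-classes on $k$ vertices tensored with the one cohomology label.

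The heart of the argument --- and the step I expect to be the main obstacle --- is the vanishing of these invariants for $k \ge 3$. The span of tree-classes on $k$ vertices modulo the Arnold relations is, as an $S_k$-representation, the Lie representation $\operatorname{Lie}(k)$ up to a sign twist dictated by the parity of $D$; because $D$ is even and each edge has odd degree, this representation is $\operatorname{Lie}(k) \otimes \operatorname{sgn}$. The label is identified across the whole block, so $S_k$ acts trivially on it, and it suffices to show $\bigl(\operatorname{Lie}(k)\otimes\operatorname{sgn}\bigr)^{S_k} = 0$ for $k \ge 3$. Writing $\operatorname{Lie}(k) = \Ind_{C_k}^{S_k}\zeta$ for a primitive character $\zeta$ of the cyclic group $C_k$, Frobenius reciprocity gives
$$
\dim\bigl(\operatorname{Lie}(k)\otimes\operatorname{sgn}\bigr)^{S_k} = \langle \operatorname{Lie}(k),\operatorname{sgn}\rangle_{S_k} = \langle \zeta,\ \operatorname{sgn}|_{C_k}\rangle_{C_k},
$$
which is nonzero precisely when $\zeta = \operatorname{sgn}|_{C_k}$, i.e. only for $k \le 2$. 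Hence only blocks of size $1$ and $2$ survive: size-$1$ blocks contribute $\tilde H^*(X;\Q) = V_X$ (the trivial label being the empty configuration), and size-$2$ blocks contribute $H^*(X;\Q) = W_X$ (the symmetric edge class $G_{ij}$ carrying any label, including the bare edge).

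Finally I would assemble the pieces: the invariants of the connected blocks are $V_X$ in length $1$ and $W_X$ in length $2$, so by the plethysm computation the full $S_n$-invariant $E_2$ page is the free graded-commutative algebra $\Q[V_X \oplus W_X]$ truncated at length $n$, which is $R_{X,\le n}$. A final check that the cohomological grading and the row grading match the bigradings assigned to $V_X$ and $W_X$ in the statement completes the identification $\mathcal{E}_2(n) \isom R_{X,\le n}$. The only genuinely delicate point is the representation-theoretic vanishing for $k\geq 3$, together with getting the sign twist right so that $k=2$ survives; everything else is bookkeeping with the normal form and the gradings.
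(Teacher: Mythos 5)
The paper does not actually prove Theorem~\ref{T:E2}: it is imported from Cohen--Taylor, F\'elix--Thomas, and Church, so there is no internal proof to compare against, and your sketch reconstructs the standard argument from those sources correctly. The key points all check out --- the decomposition of the Cohen--Taylor $E_2$-page over set partitions with $\operatorname{Lie}(k)\otimes\operatorname{sgn}$ on each $k$-block (the sign twist coming from the odd degree $D-1$ of the edge classes when $D$ is even), Klyachko's identification $\operatorname{Lie}(k)\cong\Ind_{C_k}^{S_k}\zeta$, and the Frobenius reciprocity computation showing the invariants vanish for $k\ge 3$ while singletons and pairs contribute exactly $V_X$ and $W_X$ --- and this is essentially the proof of \cite[Proposition 4.3]{Church2012v1}.
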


We differ from the standard literature by giving the elements of $W_X$ the grade $(\ast, 1)$ instead of $(\ast, \dim X -1)$. We make this choice so as to remove all the zero rows, i.e. those not indexed by multiples of $\dim X -1$, from the Cohen--Taylor--Totaro--Kriz spectral sequence. (Note that the parity of the grading is preserved since we are restricting to the case when $\dim X$ is even.) So, if $R_{X, \leq n}^{p,q}$ denotes the elements of $R_{X, \leq n}$ with bi-grade $(p,q)$, Theorem \ref{T:E2} gives a vector space isomorphism $\mathcal{E}_2^{p,q}(n) \rightarrow R_{X, \leq n}^{p,q}.$

In order to describe how to interpret the differential on $\mathcal{E}_2(n)$ in the $R_{X, \leq n}$ setting, we first need to develop some notation for the elements of $R_{X, \leq n}.$ Let $\{y_0 = 1, y_1, \hdots, y_m\}$ be a graded basis of $H^*(X;\Q).$ Then we denote the corresponding basis of $V_X$ by $\{Y_1, \hdots, Y_m\}$, where $y_i$ corresponds to $Y_i$, and we denote the corresponding basis of $W_X$ by $\{\Y_0 = \1, \Y_1, \hdots, \Y_m\}$, where $y_i$ corresponds to $\Y_i$. This induces bases on $R_X$ and $R_{X, \leq n}.$ Namely, we see that $$B^{p,q}(n) = \left\{\begin{array}{c|c}Y^{\r}\Y^{\s} & \ell(Y^{\r}\Y^{\s}) \leq n, \sum_{i=1}^m (r_s + s_i)|y_i| = p, \sum_{i=0}^m s_i = q, \mbox{~and~}r_i, s_j \in \{0,1\}\\ ~ & \mbox{~for~} |y_i| \mbox{~odd,~} |y_j| \mbox{~even}\end{array}\right\}$$ is a basis for $R_{X, \leq n}^{p,q},$ where $Y^{\r}\Y^{\s} : = Y_1^{r_1}\cdots Y_m^{r_m}\Y_0^{s_0}\cdots \Y_m^{s_m}.$

\begin{theorem} [\cite{CohenTaylor1978Springer,FelixTanre2005}]
Let $X$ be a connected, oriented, closed, even-dimensional manifold of finite type. Under the isomorphism of Theorem~\ref{T:E2}, the differential on $\mathcal{E}_2(n)$ corresponds to the differential $d$ on $R_X$ such that $d(V_X)=0$ and, for $h \in H^*(X;\Q)$ with corresponding element $\H \in W_X$, we have
$$
d \H = \frac{1}{2}\sum_{i=0}^m (-1)^{|y_i^{\vee}|}\H_i \Y_i^{\vee}
$$

where $\{y_0^{\vee}, \hdots, y_m^{\vee}\}$ is a dual basis of $\{y_0, \hdots, y_m\}$ with respect to the cup product pairing (i.e. $y_i \cdot y_i^{\vee} = y_m$ where $H^{\dim X}(X;\Q) = \Q \cdot y_m$), $\H_i$ is the element of $W_X$ corresponding to $h \cdot y_i$, and $\Y_i^{\vee}$ is the element of $W_X$ corresponding to $y_i^{\vee}.$
\end{theorem}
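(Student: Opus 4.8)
The plan is to transport the classical Cohen--Taylor differential on the full (non-invariant) Leray spectral sequence for $\PConf^n X \hookrightarrow X^n$ through the isomorphism of Theorem~\ref{T:E2} and to read off the result in the basis $B^{p,q}(n)$. On the full $E_2$-page the algebra is generated by the pulled-back classes $p_a^* y$, for $y \in H^*(X;\Q)$ and a label $1 \le a \le n$, together with the pair-classes $G_{ab}$ of degree $\dim X - 1$, one for each unordered pair; since $\dim X$ is even these satisfy $G_{ab} = G_{ba}$. The single nontrivial differential is determined by $d(p_a^* y) = 0$ and $d(G_{ab}) = \Delta_{ab}$, where $\Delta_{ab}$ is the image of the diagonal class under $p_{ab}^* : H^*(X \times X;\Q) \to H^*(X^n;\Q)$. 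Under Theorem~\ref{T:E2} the space $V_X$ is identified with the symmetrizations $\sum_a p_a^* \tilde H^*(X;\Q)$ and $W_X$ with symmetrizations of the decorated pair-classes $(p_a^* y)\,G_{ab}$, so it suffices to differentiate one symmetrized $W_X$-generator and rewrite the answer.

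First I would record the Künneth decomposition of the diagonal. With $\{y_0, \dots, y_m\}$ a homogeneous basis and $\{y_0^\vee, \dots, y_m^\vee\}$ its cup-product dual, the diagonal class is $[\Delta] = \sum_{i=0}^m (-1)^{|y_i^\vee|} y_i \tensor y_i^\vee$, so that $\Delta_{ab} = \sum_i (-1)^{|y_i^\vee|} (p_a^* y_i)(p_b^* y_i^\vee)$. This single identity is the source both of the index-sum and of the signs $(-1)^{|y_i^\vee|}$ in the stated formula; because $\dim X$ is even we have $|y_i| + |y_i^\vee| = \dim X$, hence $(-1)^{|y_i^\vee|} = (-1)^{|y_i|}$, which I would use freely to match whichever normalization the symmetrization produces.

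Next I would compute $d\H$ for the generator $\H \in W_X$ attached to $h$, which under Theorem~\ref{T:E2} corresponds to the $S_n$-symmetrization of $(p_a^* h)\,G_{ab}$. Applying the Leibniz rule with $d(p_a^* h)=0$ and $d(G_{ab}) = \Delta_{ab}$ and substituting the decomposition above restricts the $b$-factor onto the diagonal $a=b$, producing a symmetrized sum of products $\bigl(p_a^*(h\cdot y_i)\bigr)\bigl(p_a^* y_i^\vee\bigr)$. Resymmetrizing, $p_a^*(h\cdot y_i)$ becomes the length-one generator $\H_i$ attached to $h\cdot y_i$ and $p_a^* y_i^\vee$ becomes $\Y_i^\vee$, giving $\sum_i (-1)^{|y_i^\vee|} \H_i \Y_i^\vee$ up to a global scalar. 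I would then check the bookkeeping against the conventions of Theorem~\ref{T:E2}: $\H$ has length two and bidegree $(|h|,1)$, while each summand $\H_i\Y_i^\vee$ has length $1+1=2$ and bidegree $\bigl(|h|+|y_i|+|y_i^\vee|,\,0\bigr) = (|h|+\dim X,\,0)$; since $d$ preserves length it descends to the quotient $R_{X,\le n}$, as required.

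The hard part will be pinning down the overall factor of $\tfrac12$ together with the full collection of signs. The $\tfrac12$ arises because $G_{ab}$ depends only on the unordered pair, so each pair is counted twice when the symmetrized differential is re-expressed through the products $Y_i Y_i^\vee = \bigl(\sum_a p_a^* y_i\bigr)\bigl(\sum_b p_b^* y_i^\vee\bigr)$, whose ordered double sum splits into the $a<b$, $a>b$, and diagonal parts. Disentangling this, while tracking the Koszul signs incurred in commuting $p_a^* h$ past the dual class and combining them with the diagonal signs $(-1)^{|y_i^\vee|}$, is where the genuine care lies. I would fix the normalization unambiguously by imposing $d^2=0$ on $R_X$ (forced by the graded-commutative structure) and by cross-checking against a fully explicit example such as $X = \C\P^k$, where $H^*(X;\Q)$, its cup-product duals, and the resulting differential can all be written down by hand.
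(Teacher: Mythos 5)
The first thing to say is that the paper does not prove this statement at all: it is quoted, with attribution, from Cohen--Taylor and F\'elix--Tanr\'e, so there is no in-paper argument to compare yours against. Your outline is a reasonable reconstruction of how the cited proof goes --- pull back to the ordered $E_2$-page generated by the $p_a^*y$ and the pair classes $G_{ab}$, use $d(G_{ab})=\Delta_{ab}$ together with the K\"unneth decomposition $\Delta_{ab}=\sum_i(-1)^{|y_i^{\vee}|}(p_a^*y_i)(p_b^*y_i^{\vee})$, and then push the Leibniz computation through the symmetrization that realizes the isomorphism of Theorem~\ref{T:E2}. That is the right shape of argument, and your observation that the factor $\tfrac12$ originates in the unordered pairs being double-counted when the answer is re-expanded through $Y_iY_i^{\vee}=\bigl(\sum_a p_a^*y_i\bigr)\bigl(\sum_b p_b^*y_i^{\vee}\bigr)$ (equivalently, in the symmetry of the diagonal class, which makes $\H_i\Y_i^{\vee}$ and its partner indexed by $y_i^{\vee}$ appear as two names for one contribution) is exactly where the constant comes from.

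The genuine gap is in how you propose to pin that constant down. Imposing $d^2=0$ cannot determine the normalization: if $d^2=0$ then $(\lambda d)^2=\lambda^2d^2=0$ for every scalar $\lambda$, so this check is blind to the overall factor and can at best constrain relative signs between terms. The cross-check against $\C\P^k$ only fixes the constant if you already possess an independently computed answer for some $H^i(\Conf^n\C\P^k;\Q)$ to compare with, which turns the argument into verification rather than derivation. To actually nail the $\tfrac12$ you must make the isomorphism of Theorem~\ref{T:E2} explicit --- say exactly which symmetrized element of the ordered $E_2$-page the generator $\H$ is sent to, and likewise for the product $\H_i\Y_i^{\vee}$ --- and then carry out the ordered-pair versus unordered-pair count you allude to, together with the Koszul signs from commuting $p_a^*h$ past $p_b^*y_i^{\vee}$ (which, as you note, simplify because $\dim X$ is even, so $|y_i|+|y_i^{\vee}|$ is even). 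As written, the proposal correctly identifies this as the hard part but then defers it to a method that cannot resolve it.
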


Note that due to our convention of giving elements of $W_X$ the grade $(\ast, 1)$, our differentials now have different codomains, i.e. $d:E_2^{p,q}(n) \rightarrow E_2^{p+D, q-1}(n).$

\begin{theorem}[\cite{Totaro1996, Kriz1994, FelixTanre2005, Knudsen2015}]\label{T:1D}
The spectral sequence $\mathcal{E}_2(n)$ has only one nontrivial differential.
\end{theorem}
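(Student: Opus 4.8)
The plan is to show that once the single differential $d$ of the previous theorem has been taken, the page $\mathcal{E}_2(n)$ collapses, i.e.\ every potential higher differential vanishes. I would isolate the mechanism behind this by equipping the spectral sequence with an auxiliary integer grading, a \emph{weight}, and tracking how the Leray differentials interact with it. Two structural features drive the argument: (i) every differential $d_r$ in a cohomological Leray spectral sequence raises total cohomological degree by exactly $1$, and (ii) the $E_2$-page is supported only in fibre-degrees that are multiples of $D-1$ (this is precisely the statement that the rows omitted in Theorem~\ref{T:E2} are zero). Writing the fibre-degree as $(D-1)q$, where $q$ counts the diagonal generators (the $W_X$-factors, which index the second bigrading), a differential $d_r\colon E_r^{s,t}\to E_r^{s+r,\,t-r+1}$ can be nonzero only when $t-r+1$ is again a multiple of $D-1$; since $t\equiv 0$, this forces $r=1+k(D-1)$ and hence $\Delta q=-k$ for some integer $k\ge 1$. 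The differential of the previous theorem is the case $k=1$, namely $d_{D}$ (which becomes the displayed $d$ after the row-compression), and the task is to rule out $k\ge 2$.

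For $X$ a smooth complex projective variety (the case of Totaro and Kriz) I would make this rigorous with mixed Hodge theory. The Leray spectral sequence is then a spectral sequence of mixed Hodge structures, so each differential is a morphism of mixed Hodge structures and in particular preserves weight. I would compute the weight of each $E_2$-class to be $P+q$, where $P=s+t$ is the total degree: a diagonal generator sits in degree $D-1$ but, being a residue/Gysin class of the codimension-$(D/2)$ diagonal, carries weight $D$, contributing an excess of $+1$ over its degree per factor, while ordinary base classes have weight equal to their degree. Since a nonzero differential preserves weight while raising $P$ by $1$, it must change $q$ by exactly $-1$; comparing with $\Delta q=-k$ from the previous paragraph forces $k=1$. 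Thus $d_{D}$ is the only differential compatible with the weight grading, and the sequence collapses after it.

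For a general connected, oriented, closed, even-dimensional manifold of finite type, mixed Hodge theory is unavailable, and this is where the real work lies. Here the weight survives as a purely algebraic grading on $R_X$: on the bigrading $(p,q)$ of Theorem~\ref{T:E2} set $w=p+qD$, so that a generator of $V_X$ coming from $H^g(X)$ has weight $g$ while the corresponding generator of $W_X$ has weight $g+D$. A direct check using the differential formula above shows that $d$ is homogeneous of weight $0$ (it trades one $W_X$-factor for a product of two $V_X$-factors whose weights sum correctly), whereas the $k$-th potential higher differential shifts $(p,q)$ by $(1+k(D-1),\,-k)$ and hence changes $w$ by $1-k$. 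The strategy is then to promote $(R_X,d)$ to an honest weight-graded commutative differential graded algebra modelling $\Conf^n X$, following F\'elix--Tanr\'e or Knudsen's factorization-homology construction, and to argue that any surviving higher differential, being a map of this weight-graded model, cannot change $w$; since $1-k\neq 0$ for $k\ge 2$, all higher differentials vanish. Even-dimensionality is essential at this point, as it is what makes the bigrading on $V_X\oplus W_X$ compatible with graded-commutativity; the odd-dimensional case genuinely differs, producing the $\Sym\otimes\bigwedge$ formula recalled earlier rather than a single nontrivial differential.

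I expect the main obstacle to be exactly this passage from the projective to the general even-dimensional setting: without Hodge theory one must realize the weight as a genuine invariant at the cochain level rather than of an abstract spectral sequence, which is the substantive content supplied by F\'elix--Tanr\'e and Knudsen. The two remaining compatibilities are comparatively mild and I would dispatch them quickly. The length grading is preserved by $d$ (an element of length $\le n$ maps to one of length $\le n$), so passing to the truncation $R_{X,\le n}$ commutes with the differential and cannot introduce new higher differentials. And since $\mathcal{E}_2(n)$ is already the $S_n$-invariant page and the weight grading is $S_n$-invariant, the argument applies verbatim; equivalently, taking $S_n$-invariants is exact over $\Q$, so the collapse of the full spectral sequence for $\PConf^nX$ descends to its invariants.
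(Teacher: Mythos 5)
The paper does not prove Theorem~\ref{T:1D} at all: it is imported from the literature, with the smooth projective case due to Totaro and Kriz and the general even-dimensional case due to F\'elix--Tanr\'e and Knudsen. So there is no in-paper argument to compare against, and your proposal has to be judged as a reconstruction of the cited proofs.

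Your first two paragraphs are essentially correct and are Totaro's argument: the support of the rows forces $r=1+k(D-1)$, and in the projective case the $E_2$-page carries pure Hodge structures of weight $p+jD$ on the row $q=j(D-1)$ (each diagonal class has degree $D-1$ but weight $D$), so strict compatibility of the differentials with weights kills every $d_r$ with $k\ge 2$. The weight bookkeeping $1-k\ne 0$ checks out. The gap is in the third paragraph, which is the only case the paper actually needs for, say, $\Sigma_g$. Defining $w=p+qD$ as a grading on $R_X$ costs nothing, and verifying that the algebraic differential $d$ preserves it is easy; but the higher Leray differentials $d_r$, $r>D$, are determined by the topology of $\PConf^nX\hookrightarrow X^n$, not by the algebra structure of the $E_2$-page, so there is no a priori reason they respect an auxiliary grading you have imposed by hand. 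You acknowledge that one must ``realize the weight as a genuine invariant at the cochain level,'' but that realization is precisely the theorem, and you do not supply it. Moreover, this is not how the cited proofs actually close the gap: F\'elix--Tanr\'e and Knudsen do not promote the weight to a cochain-level invariant. They compute $H^*(\Conf^nX;\Q)$ by an independent route (a Poincar\'e--Lefschetz duality CDGA model, respectively factorization homology and a Chevalley--Eilenberg complex) and identify the answer with $H^*(R_{X,\le n},d)$; degeneration after the single differential then follows by comparing dimensions page by page. So your plan for the general case is not a proof, and the strategy you sketch for completing it is different from, and harder to execute than, the one in the sources. The closing remarks about the length truncation and $S_n$-invariants being harmless are fine.
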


Combining theorems \ref{T:E2} -- \ref{T:1D}, turns the task of computing the cohomology groups $H^i(\Conf^n X; \Q)$ into a linear algebra problem that one can program a computer to solve. 

\subsection*{Notation} Throughout the rest of this paper, we will refer to $R_{X, \leq n}$ as $E_2(n),$ and $E_2^{p,q}(n)$ will refer to the elements of $R_{X, \leq n}$ with bi-grade $(p,q).$ We will sometimes just say $E_2$, the $E_2$-page, or $E_2^{p,q}$ if $n$ is fixed. For clarity, we shall sometimes write $d^{p,q}$ for $d: E_2^{p,q}(n) \rightarrow E_2^{p+D,q-1}(n).$

\section{An exact subcomplex}

We now define a subcomplex of the cochain complex associated to the $E_2$-page involving the orientation class of $X$. Using a filtration on the cochain complex, we show that this subcomplex is exact. This allows us to quotient out by it and is the key fact that allows us to prove Theorem \ref{T:SIVC}. The ordered version of our subcomplex is defined in \cite[Proposition 5.5]{AshrafAzamBerceanu2014}, although it takes a bit of work to see this.

\begin{proposition}
\label{top}
Let $X$ be a complex manifold of real dimension $D$, let $B(X) = \{y_0=1, y_1, \hdots, y_m\}$ be a graded basis of $H^*(X; \Q),$ and fix $n \geq 1.$ Let $C^i(n) = \bigoplus_{p+(D-1)q = i} E_2^{p,q}(n)$ be the cochain complex associated to the $E_2$-page of $\Conf^nX.$ Recall that $$B^{p,q}(n) = \left\{\begin{array}{c|c}Y^{\r}\Y^{\s} & \ell(Y^{\r}\Y^{\s}) \leq n, \sum_{i=1}^m (r_s + s_i)|y_i| = p, \sum_{i=0}^m s_i = q, \mbox{~and~}r_i, s_j \in \{0,1\}\\ ~ & \mbox{~for~} |y_i| \mbox{~odd,~} |y_j| \mbox{~even}\end{array}\right\}$$ is a basis for $E_2^{p,q}(n).$ Let $A^{p,q}(n) \subseteq E_2^{p,q}(n)$ denote the subspace $$A^{p,q}(n) := \vspan\{Y^{\r}\Y^{\s} \in B^{p,q}(n) | r_m \geq 2 \mbox{~or~} s_m \geq 1\},$$ and let $\mathbf{A}(n)$ be the corresponding subcomplex of $\mathbf{C}(n)$. We claim that $\mathbf{A}(n)$ is exact.
\end{proposition}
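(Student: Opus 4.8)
The whole argument will hinge on a single computation. Since $y_m$ is the top (fundamental) class, $y_m \cup y_i = 0$ for every $i\geq 1$, so in the differential formula for $d\Y_m$ only the $i=0$ term survives, and because $y_0^\vee = y_m$ one obtains $d\Y_m = \tfrac{1}{2}Y_m^2$, where $Y_m\in V_X$ is the generator corresponding to the point class. I would record this first, since it is exactly what couples the two conditions defining $A^{p,q}(n)$: the exterior generator $\Y_m$ (which forces $s_m=1$) differentiates to the square $Y_m^2$ (which forces $r_m\geq 2$). This uses crucially that $D$ is even, so that $Y_m$ is a polynomial generator and $\Y_m$ is square-zero. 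To check that $\mathbf{A}(n)$ is a subcomplex, I would write a basis monomial in normal form $w = Y_m^{a}\Y_m^{b}M$ with $M$ free of $Y_m,\Y_m$, and use that $d$ is a derivation killing $V_X$: if $a=r_m\geq 2$ the factor $Y_m^{a}$ persists in every term of $dw$, so each term still has $Y_m$-power $\geq 2$; if $b=s_m=1$ each term of $dw$ either keeps $\Y_m$ (when another $\Y_j$ is differentiated) or replaces it by $\tfrac12 Y_m^2$, landing in the $r_m\geq 2$ part. Either way $dw\in A$.

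For exactness the plan is to exhibit an explicit contracting homotopy rather than compute cohomology. On the normal form $w=Y_m^{a}\Y_m^{b}M$ I would define $h(w) = 2\,Y_m^{a-2}\Y_m M$ when $b=0$ and $a\geq 2$, and $h(w)=0$ otherwise (in particular whenever $b=1$). Because $Y_m$ has length $1$ and $\Y_m$ has length $2$, the swap of $Y_m^2$ for $\Y_m$ preserves length, so $h$ is well defined on the truncation $E_2^{\bullet}(n)$; one checks it lowers the cohomological degree by one and lands in $A$ (its image always has $s_m=1$). The goal is then to verify $dh+hd=\mathrm{id}$ on every monomial of $A$, splitting into the two exhaustive cases $b=1$ (any $a$) and $b=0,\ a\geq 2$.

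The case $b=1$ is immediate: there $h(w)=0$, while $dw = \tfrac12 Y_m^{a+2}M - Y_m^{a}\Y_m\,dM$, and $h$ kills the second term (it contains $\Y_m$) and returns $w$ on the first, so $hd(w)=w$. The delicate case, and the step I expect to be the main obstacle, is $b=0,\ a\geq 2$: here $hd(w)$ involves $h(Y_m^{a}\,dM)$, and differentiating the generators inside $M$ produces extra copies of the top class $Y_m$ (for instance the $i=0$ term sends $\Y_j \mapsto \pm Y_j Y_m$). These cross terms are precisely what must cancel. Computing $dh(w) = w - 2\,Y_m^{a-2}\Y_m\,dM$ from $d\Y_m=\tfrac12 Y_m^2$, and noting that $h$ merely reinserts one $\Y_m$ in exchange for two $Y_m$'s irrespective of how many extra $Y_m$'s appear, one gets $h(Y_m^{a}\,dM) = 2\,Y_m^{a-2}\Y_m\,dM$; the two correction terms annihilate and $dh(w)+hd(w)=w$. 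This makes $h$ a contracting homotopy, so $\mathbf{A}(n)$ is exact.

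If one prefers to organize the same cancellation structurally, I would instead filter $\mathbf{A}(n)$ by the number of factors $\Y_j$ with $j\neq m$; the full differential preserves this filtration, and on the associated graded all the $Y_m$-producing cross terms are pushed into lower filtration, so the induced differential reduces to the two-term complex $\Y_m\mapsto \tfrac12 Y_m^2$. That complex is manifestly acyclic (the $s_m=1$ monomials map isomorphically, up to the length truncation, onto the $r_m\geq 2$ monomials), and since the filtration is finite the exactness of the associated graded forces exactness of $\mathbf{A}(n)$.
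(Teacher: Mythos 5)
Your proof is correct, but its main argument takes a genuinely different route from the paper's. The paper filters $\mathbf{A}(n)$ by the number of factors $\Y_j$ with $j\neq m$ and proves exactness by induction, checking on each quotient $\mathbf{A_k}(n)/\mathbf{A_{k-1}}(n)$ that the induced differential is injective above the bottom $q$-degree and surjective onto it via the explicit preimages $Y_1^{r_1}\cdots Y_m^{r_m-2}\Y_0^{s_0}\cdots\Y_{m-1}^{s_{m-1}}\Y_m$ --- this is precisely the ``structural'' variant you sketch in your closing paragraph. Your primary argument instead packages the entire cancellation into a single contracting homotopy $h$ trading $Y_m^2$ for $\Y_m$, and the verification of $dh+hd=\mathrm{id}$ on the two types of monomials spanning $A^{p,q}(n)$ goes through: the key identity $h(Y_m^{a}\,dM)=2\,Y_m^{a-2}\Y_m\,dM$ holds because $Y_m$ has even total degree (so the extra copies of $Y_m$ produced by $dM$ commute past $\Y_m$ without signs) and because $d$ maps $W_X$ into products of $V_X$-elements, so $dM$ never reintroduces $\Y_m$; moreover the swap $Y_m^2\leftrightarrow\Y_m$ preserves length, so $h$ descends to the truncation $R_{X,\leq n}$. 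The homotopy is arguably cleaner: it proves exactness in one stroke, makes compatibility with the length truncation transparent, and absorbs the subcomplex verification. The one caveat is normalization: the paper's displayed formula for $d$ carries a factor $\tfrac12$, while its worked computations use $d\Y_m=Y_m^2$; your argument only needs $d\Y_m$ to be a nonzero multiple of $Y_m^2$, with the constant in $h$ adjusted to match, so this discrepancy is harmless.
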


\begin{proof}
To prove the claim, we shall impose a filtration $\{\mathbf{A_k}(n)\}_{k \in \N}$ on $\mathbf{A}(n)$ and show, via induction, that each $\mathbf{A_k}(n)$ is an exact subcomplex of $\mathbf{C}(n).$ To this end, define $A_k^{p,q}(n) \subseteq E_2^{p,q}(n)$ to be the subspace spanned by $$B_k^{p,q}(n) : = \left \{ \begin{array}{c|c}Y^{\r}\Y^{\s} \in B^{p,q}(n) & \sum_{i=0}^{m-1} s_i \leq k, \mbox{~and~} r_m \geq 2 \mbox{~or~} s_m \geq 1 \end{array}\right\}.$$ To show that $\mathbf{A_k}(n)$ is indeed a subcomplex of $\mathbf{A}(n),$ consider our two ``typical" basis elements of $A_k^{p,q}(n)$: $Y^{\r}\Y_0^{s_0} \cdots \Y_{m-1}^{s_{m-1}}$ with $r_m \geq 2$ and  $Y^{\r}\Y_0^{s_0}\cdots \Y_{m-1}^{s_{m-1}}\Y_m^{s_m}$ with $s_m \geq 1.$ 
Now $$d(Y^{\r}\Y_0^{s_0}\Y_1^{s_1}\cdots \Y_{m-1}^{s_{m-1}}) = Y^{\r}\cdot d(\Y_0^{s_0} \cdots \Y_{m-1}^{s_{m-1}})$$ so $d(Y^{\r}\Y_0^{s_0}\cdots \Y_{m-1}^{s_{m-1}}) \in \mathbf{A}_k(n).$ Furthermore, $Y^{\r}\Y_0^{s_0}\cdots \Y_{m-1}^{s_{m-1}} \in A_k^{p,q}(n)$ means that $q \leq k,$ and since $d:A^{p,q}(n) \rightarrow A^{p+D, q-1}(n),$ we see that $d(Y^{\r}\Y_0^{s_0}\Y_1^{s_1}\cdots \Y_{m-1}^{s_{m-1}}) \in A_k^{p+D, q-1}(n).$ Similarly, for $Y^{\r}\Y_0^{s_0} \cdots \Y_{m-1}^{s_{m-1}}\Y_m^{s_m}$ with $s_m \geq 1$,
\begin{eqnarray*}
d(Y^{\r}\Y_0^{s_0}\cdots\Y_{m-1}^{s_{m-1}}\Y_m^{s_m}) & = & Y^{\r}d(\Y_0^{s_0}\cdots \Y_{m-1}^{s_{m-1}})\Y_m^{s_m} + (-1)^tY^{\r}\Y_0^{s_0}\cdots \Y_{m-1}^{s_{m-1}}\cdot d(\Y_m^{s_m})\\
& = & Y^{\r}\cdot d(\Y_0^{s_0}\cdots \Y_{m-1}^{s_{m-1}})\Y_m^{s_m}\\
&~& + (-1)^t s_m Y_1^{r_1} \cdots Y_{m-1}^{r_{m-1}}Y_m^{r_m+2}\Y_0^{s_0}\cdots \Y_{m-1}^{s_{m-1}}\Y_m^{s_m-1},
\end{eqnarray*}
where $t = \sum_{i=0}^{m-1}s_i(|y_i| + 1).$ So $d(Y^{\r}\Y_0^{s_0}\cdots\Y_{m-1}^{s_{m-1}}\Y_m^{s_m}) \in \mathbf{A_k}(n)$ when $s_m\geq 1.$ Since $Y_1^{r_1} \cdots Y_m^{r_m}\Y_0^{s_0}\cdots \Y_m^{s_m} \in A_k^{p,q}(n)$ implies that $q \leq k+s_m,$ and $d: A^{p,q}(n) \rightarrow A^{p+D, q-1}(n),$ we see that $d(Y_1^{r_1} \cdots Y_m^{r_m}\Y_0^{s_0}\cdots \Y_{m-1}^{s_{m-1}}\Y_m^{s_m}) \in A_k^{p+D, q-1}(n).$ Thus $\mathbf{A_k}(n)$ is indeed a subcomplex of $\mathbf{A}(n)$. 

We first prove that $\mathbf{A_0}(n)$ is exact. First consider $A_0^{p,0}(n)$. Since $A_0^{p+D,-1}(n) = \{0\},$ we see that $\ker(d^{p,0}) = A_0^{p,0}(n).$ Let $Y^{\r} \in B_0^{p,0}(n).$ Then $$d(Y_1^{r_1}\cdots Y_{m-1}^{r_{m-1}}Y_m^{r_m-2}\Y_m) = Y^{\r}$$ and $Y_1^{r_1}\cdots Y_{m-1}^{r_{m-1}}Y_m^{r_m-2}\Y_m \in A_0^{p-D,1}(n)$ since $s_m \geq 2.$ Thus $\ker\left(d^{p,0}|_{A_0^{p,0}}\right) = \im\left(d^{p-D,1}|_{A_0^{p-D,1}}\right).$

Now suppose $q \geq 1$, and let $$\alpha = \sum_{Y^{\r}\Y_m^q \in B_0^{p,q}(n)} c_{\r}Y^{\r}\Y_m^q \in \ker(d^{p,q})$$ where $c_{\r} \in \Q.$ Then $$d(\alpha) = \sum_{Y^{\r}\Y_m^q \in B_0^{p,q}(n)} c_{\r}qY_1^{r_1}\cdots Y_{m-1}^{r_{m-1}}Y_m^{r_m+2}\Y_m^{q-1} = 0.$$ But the only way $d(\alpha) = 0$ is if $\alpha = 0.$ Thus $\ker\left(d^{p,q}|_{A_0^{p,q}}\right) = \{0\}$ for $q\geq 1$. It follows trivially that $\ker\left(d^{p,q}|_{A_0^{p,q}}\right) = \im\left(d^{p-D, q+1}|_{A_0^{p-D,q+1}}\right)$, and so $\mathbf{A_0}(n)$ is exact.

As our inductive hypothesis, suppose that $\mathbf{A_{k-1}}(n)$ is exact. To show that $\mathbf{A_k}(n)$ is exact, it suffices to show that $\mathbf{A_k}(n)/\mathbf{ A_{k-1}}(n) =: \mathbf{\tilde{A}_k}(n)$ is exact. Note that $\tilde{A}_k^{p,q}(n) = \{0\}$ for $q < k.$ First suppose that $q = k.$ Then $$\tilde{B}_k^{p,k}(n) := B_k^{p,k}(n) - B_{k-1}^{p,k}(n) = \left\{Y^{\r}\Y_0^{s_0} \cdots \Y_{m-1}^{s_{m-1}} \in B_k^{p,k}(n) \right\}.$$ Since $\tilde{A}_k^{p+D, k-1}(n) = \{0\},$ we see that $\ker\left(d^{p,k}|_{\tilde{A}_k^{p,k}(n)}\right) = \tilde{A}_k^{p,k}(n).$ Given $Y^{\r}\Y_0^{s_0} \cdots \Y_{m-1}^{s_{m-1}} \in \tilde{A}_k^{p, k}(n),$ we see that $$d(Y_1^{r_1} \cdots Y_{m-1}^{r_{m-1}} Y_m^{r_m-2} \Y_0^{s_0} \cdots \Y_{m-1}^{s_{m-1}}\Y_m) = Y^{\r} \Y_0^{s_0} \cdots \Y_{m-1}^{s_{m-1}},$$ and $Y_1^{r_1} \cdots Y_{m-1}^{r_{m-1}}Y_m^{r_m-2} \Y_0^{s_0} \cdots \Y_{m-1}^{s_{m-1}}\Y_m \in \tilde{A}_k^{p-D, k+1}(n)$ since $s_m = 1.$ 

Now consider $\tilde{A}_k^{p,q}(n)$ for $q \geq k+1.$ Then $$\tilde{B}_k^{p, q}(n) = B_k^{p, q}(n) - B_{k-1}^{p, q}(n) = \left \{Y^{\r}\Y_0^{s_0} \cdots \Y_{m-1}^{s_{m-1}}\Y_m^{q-k} \in B_k^{p, q}(n) \right \}.$$ Let $$\alpha = \sum_{Y^{\r}\Y^{\s} \in \tilde{B}_k^{p, q}}(n) c_{\r, \s}Y^{\r}\Y_0^{s_0}\cdots \Y_{m-1}^{s_{m-1}}\Y_m^{q-k} \in \ker\left(d^{p,q}|_{\tilde{A}_k^{p, q}(n)}\right),$$ where $c_{\r,\s} \in \Q.$ Then 
\begin{eqnarray*}
d(\alpha) & = & \sum_{Y^{\r}\Y^{\s} \in \tilde{B}_k^{p, q}(n)} c_{\r, \s}Y^{\r}\cdot d(\Y_0^{s_0}\cdots \Y_{m-1}^{s_{m-1}}\Y_m^{q-k})\\
& = & \sum_{Y^{\r}\Y^{\s} \in \tilde{B}_k^{p, q}(n)} \left( c_{\r, \s}Y^{\r}\cdot d(\Y_0^{s_0} \cdots \Y_{m-1}^{s_{m-1}}) \cdot \Y_m^{q-k} + (-1)^t c_{\r, \s} Y^{\r} \Y_0^{s_0} \cdots \Y_{m-1}^{s_{m-1}} \cdot d(\Y_m^{q-k})\right)\\
& = & \sum_{Y^{\r}\Y^{\s} \in \tilde{B}_k^{p, q}(n)} (-1)^t(q-k) c_{\r, \s} Y_1^{r_1} \cdots Y_{m-1}^{r_{m-1}} Y_m^{r_m+2} \Y_0^{s_0} \cdots \Y_{m-1}^{s_{m-1}}\Y_m^{q-k-1}\\
& = & 0
\end{eqnarray*}
where $\displaystyle{ t = \sum_{i=0}^{m-1} s_i(|y_i|+1).}$
From this computation, we see that the only way $d(\alpha) = 0$ is if $\alpha = 0.$ Thus $\ker\left(d^{p,q}|_{\tilde{A}_k^{p,q}(n)}\right) = 0$, and so we trivially have that $\ker\left(d^{p,q}|_{\tilde{A}_k^{p,q}(n)}\right) = \im\left(d^{p-D,q+1}|_{\tilde{A}_k^{p-D,q+1}(n)}\right).$ Thus $\mathbf{A_k}(n)$ is exact, and by induction it follows that $\mathbf{A}(n)$ is exact.
\end{proof}

From now on, we will always work in the $E_2$-page quotiented out by this exact subcomplex (i.e. that $r_m \in \{0,1\}$ and $s_m = 0$), but by a slight abuse of notation, we will still refer to the $p,q$ entry as $E_2^{p,q}(n).$  

Armed with Proposition \ref{top}, we may now prove Theorem \ref{T:SIVC}.

\begin{proof}[Proof of Theorem \ref{T:SIVC}.]
Fix $q \geq 0.$ Recall that given $Y^{\mathbf{r}}\Y^{\mathbf{s}} \in B^{p,q}(n),$ 
\begin{eqnarray*}
\ell(Y^{\r}\Y^{\s}) & = & 2s_0 + \sum_{i=1}^m(r+i+2s_i)\\
& = & 2q + \sum_{i=1}^m r_i\\
& \leq & n.
\end{eqnarray*}
Thus $\sum_{i=1}^m r_i \leq n-2q.$ Additionally, from Proposition \ref{top}, we know that $r_m \in \{0,1\}$ and $s_m = 0.$

It follows that for a fixed $q$ and $n$, $Y^{\r}\Y^{\s}$ can have a $p$-value of at most $q(D-1) + (n-1-2q)(D-1) + D$, and so $B^{p,q}(n) = \emptyset$ for 
\begin{eqnarray}
 p & \geq & q(D-1) + (n-2q-1)(D-1) + D + 1\nonumber\\
 & = & (n-q)(D-1) + 2.\label{star}
\end{eqnarray}

Recall that $$H^i(\Conf^nX;\Q) = \bigoplus_{p+(D-1)q = i} E_\infty^{p,q}(n).$$ From \eqref{star}, we have that, for a fixed $n$, $$E_\infty^{p,q}(n) = \{0\}$$ for $p+(D-1)q \geq n(D-1)+2.$ Thus it follows that $H^i(\Conf^nX;\Q) = 0$ for $i \geq n(D-1) + 2.$
\end{proof}

The bound given by Theorem \ref{T:SIVC} is sharp. As we stated earlier in Proposition \ref{P:gCD},  $$H^{n+1}(\Conf^n\Sigma_g; \Q) \neq 0$$ for $n \geq 3.$ We now give the proof of Proposition \ref{P:gCD}.

\begin{proof}[Proof of Proposition \ref{P:gCD}.]
Recall that $\{1, a_1, \cdots, a_g, b_1, \cdots, b_g, t\}$ is a graded basis of $H^*(X;\Q)$ with $a_ib_i = -b_ia_i = t$ for all $1 \leq i \leq g.$ From our description of $d$ given in Section 2, we see that $$d(\A^{\r}\B^{\s}) = -2\sum_{i=1}^g(r_iA_iT\A^{\r-\mathbf{e_i}}\B^{\s} + s_iB_iT\A^{\r}\B^{\s - \mathbf{e_i}})$$ where $\mathbf{e_i}$ is the vector whose $i$th component is $1$ and all other components are $0$. 

First suppose that $n$ is even. Then $n = 2k$ for some $k \in \N.$ We shall show that $E_\infty^{k+2,k-1}(n) \neq \{0\}$, and thus $$H^{n+1}(\Conf^nX;\Q) = H^{2k+1}(\Conf^nX;\Q) = \bigoplus_{p+q = 2k+1} E_\infty^{p,q}(n) \neq \{0\}.$$

Now $E_2^{k+2,k-1}(n)$ has basis $$B^{k+2,k-1}(n) = \left\{A_iT\A^{\r}\B^{\s}, B_iT\A^{\r}\B^{\s} : 1 \leq i \leq g, \sum_{i=1}^g(r_i+s_i) = k-1\right\}.$$ Thus $$\dim E_2^{k+2,k-1}(n) = 2g \cdot \binom{k+2g-2}{2g-1}.$$ Since 
$d(A_iT\A^{\r}\B^{\s}) = d(B_iT\A^{\r}\B^{\s}) = 0$ for $1 \leq i \leq g$, it follows that $\rank d^{k+2, k-1} = 0.$

Now $E_2^{k,k}(n)$ has basis $$B^{k,k}(n) = \left\{\A^{\r}\B^{\s} : \sum_{i=1}^g(r_i+s_i) = k \right\}.$$ Then $$\rank d^{k,k} \leq \dim E_2^{k,k}(n) = \binom{k+2g-1}{2g-1}.$$

It follows that
\begin{eqnarray*}
\dim E_\infty^{k+2,k-1}(n) & = & \dim E_2^{k+2,k-1} - \rank d^{k+2,k-1} - \rank d^{k,k}\\
& \geq & 2g \cdot \binom{k+2g-2}{2g-1} - \binom{k+2g-1}{2g-1}\\
& = & \frac{2g \cdot (k+2g-2)!}{(k-1)!(2g-1)!} - \frac{(k+2g-1)!}{k! \cdot (2g-1)!}\\
& = & (k-1)\cdot \binom{k+2g-2}{2g-2}\\
& > & 0,
\end{eqnarray*}
since $g \geq 1$ and $k \geq 2.$

Now suppose that $n$ is odd. Then $n = 2k+1$ for some $k \in \N.$ In a manner similar to the above, we shall show that $E_{\infty}^{k+2,k} \neq \{0\}$, and it will thus follows that $$H^{n+1}(\Conf^nX;\Q) = H^{2k+2}(\Conf^nX;\Q) = \bigoplus_{p+q = 2k+2} E_\infty^{p,q}(n) = \{0\}.$$

We see that $E_2^{k+2,k}(n)$ has basis $$B^{k+2,k}(n) = \left\{T\A^{\r}\B^{\s} : \sum_{i=1}^g(r_i+s_i) = k \right\} \neq \emptyset.$$ Since $d(T\A^{\r}\B^{\s}) = 0$, we see that $\rank d^{k+2,k} = 0.$ 

Furthermore, $B^{k,k+1}(n) = \emptyset$, so $\rank d^{k,k+1} = 0.$ Thus $$\dim E_\infty^{k+2,k}(n) = \dim E_2^{k+2,k} - \rank d^{k+2,k} - \rank d^{k, k+1} > 0,$$ and so $H^{n+1}(\Conf^nX;\Q) \neq 0.$
\end{proof}

\section{Proofs of unstable and stable values}
In this section, we give concrete calculations for $H^*(\Conf^nX;\Q)$ for four spaces: $\C\P^1$, $\C\P^2$, $\C\P^3$, and $\Sigma_1$. This proves propositions \ref{stableg1betti} and \ref{cp3stableinstability}.

Recall that the $i$th Betti number $b_i(n)$ is given by $$b_i(n) = \sum_{p+(D-1)q = i} \dim(E_\infty^{p,q}(n)),$$ and $$\dim(E_\infty^{p,q}(n)) = \dim(E_2^{p,q}(n)) - \rank(d^{p,q}) - \rank(d^{p-D, q+1}).$$ Thus for the examples given below, we begin by computing bases for each $E_2^{p,q}(n)$, then we compute $d^{p,q}$ of each basis and determine the rank of each $d^{p,q}$. From there, we add and subtract the relevant quantities to determine the stable and unstable values of the Betti numbers.

\subsection{Case of $\C\P^1$}
Recall that $\{1, x\}$ is a graded basis of $H^*(\C\P^1;\Q)$ with $|1| = 0$ and $|x| = 2.$ An arbitrary basis element of the $E_2$-page is of the form $X^{r}\1^{s}$ with $r, s \in \{0,1\}.$ Thus we see that $B^{p,q}(n) = \emptyset$ for $p \geq 3$ and $q \geq 1.$ Since only a finite number of $B^{p,q}(n)$ are nonempty, our program can compute all the Betti numbers of $\Conf^n\C\P^1,$ proving the following proposition.

\begin{proposition}
\label{cp1betti} 
The stable Betti numbers of $\Conf^n \C\P^1$ are $b_0 = b_3 = 1$ and $b_i = 0$ for $i \neq 0,3.$ Furthermore, the entire cohomology is given by 
$$
H^i(\Conf^n\C\P^1; \Q) = \begin{cases}
\Q & \mbox{if } i = 0\\
\Q & \mbox{if } n = 1, i = 2\\
\Q & \mbox{if } n \geq 3, i =3\\
0 & \mbox{else}
\end{cases}.
$$
\end{proposition}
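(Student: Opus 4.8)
The plan is to exploit the fact that, for $X=\C\P^1$, only finitely many bidegrees of the $E_2$-page are nonzero, make all of them explicit, and then read off $E_\infty$ by hand. After passing to the quotient by the exact subcomplex of Proposition~\ref{top} (so that $r_m\in\{0,1\}$ and $s_m=0$), the surviving generators of $E_2(n)$ are the class $X\in V_X$ coming from $x\in H^2(\C\P^1;\Q)$ and the class $\1=\Y_0\in W_X$. Since $x$ has even degree the quotient forces $X^2=0$, and since $\1$ has total degree $0+1=1$ (odd) graded-commutativity gives $\1^2=0$; hence every basis element is of the form $X^r\1^s$ with $r,s\in\{0,1\}$, as asserted in the preamble. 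First I would tabulate these four elements with their bidegrees, cohomological degrees $i=p+(D-1)q=p+q$, and lengths $\ell(X^r\1^s)=r+2s$: the unit $1$ in bidegree $(0,0)$ and degree $0$ (length $0$), the class $X$ in bidegree $(2,0)$ and degree $2$ (length $1$), the class $\1$ in bidegree $(0,1)$ and degree $1$ (length $2$), and $X\1$ in bidegree $(2,1)$ and degree $3$ (length $3$). The length determines for which $n$ each element is present.

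Next I would pin down the single differential. As $X\in V_X$ we have $dX=0$, and applying the description of $d$ from Section~2 to $\1=\Y_0$ yields $d\1=c\,X$ for a nonzero constant $c$: the only surviving contributions pair the unit with the fundamental class $x$ and vice versa, and these two terms add rather than cancel. By the Leibniz rule $d(X\1)=\pm X\,d\1=\pm c\,X^2=0$ since $X^2=0$ in the quotient. Thus the whole complex is determined by the one nonzero map $d^{0,1}\colon E_2^{0,1}(n)\to E_2^{2,0}(n)$, $\1\mapsto cX$, which occurs exactly when $\1$ is present, i.e. for $n\geq 2$.

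I would then split on $n$ and compute $E_\infty=\ker d/\im d$ in each bidegree, extracting $b_i(n)=\sum_{p+q=i}\dim E_\infty^{p,q}(n)$. For $n=1$ only $1$ and $X$ have length $\leq 1$ and $d\equiv 0$, so $H^0=H^2=\Q$, recovering $H^*(\C\P^1;\Q)$. For $n=2$ the class $\1$ appears: $d^{0,1}$ has trivial kernel (so $\1$ contributes nothing to $H^1$) and its image is all of $\langle X\rangle$ (so $X$ contributes nothing to $H^2$), leaving only $E_\infty^{0,0}=\Q$; hence $H^0=\Q$ and $H^i=0$ for $i\neq 0$. For $n\geq 3$ the cocycle $X\1$ also appears and lies in no image (nothing maps into bidegree $(2,1)$, as $E_2^{0,2}=0$), so it survives, while $\1$ and $X$ are killed exactly as before; this gives $H^0=H^3=\Q$ and $H^i=0$ otherwise. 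Assembling the three cases produces the stated piecewise description, and in particular the stable values $b_0=b_3=1$, $b_i=0$ otherwise (consistent with Theorem~\ref{stability}, since the degree-$3$ class has already stabilized by $n=3$).

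The only genuine content is the non-vanishing $d\1\neq0$; everything else is bookkeeping over four basis vectors. Consequently the step I expect to require the most care is verifying that the differential applied to $\Y_0$ does not vanish for $\C\P^1$, i.e. that the two terms arising from the dual pairing of $1$ with the top class do not cancel. This is precisely what separates $n=1$ (where $H^2=\Q$) from $n\geq2$ (where $H^2=0$); were $d\1$ to vanish, the computation would spuriously report a class in $H^1$ and an $n$-independent class in $H^2$, contradicting, for instance, the rational cohomology of $\Conf^2\C\P^1$.
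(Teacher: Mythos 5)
Your proposal is correct and follows the paper's approach exactly: the same observation that, after quotienting by the exact subcomplex, the $E_2$-page of $\Conf^n\C\P^1$ is spanned by the four elements $X^r\1^s$ with $r,s\in\{0,1\}$, with the single nontrivial differential $d\1=2X$. The only difference is that the paper delegates the resulting finite computation to its Sage program, whereas you carry out the four-element bookkeeping (lengths, bidegrees, and the one rank computation) by hand, which matches the stated answer in all three ranges of $n$.
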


These numbers have been previously computed by many others. Cohen and Taylor \cite[Proposition 5.4]{CohenTaylor1978Springer} computed the stable Betti numbers of $\Conf^n\C\P^1.$ Sevryuk \cite[Theorem 2]{Sevryuk1984}, Salvatore \cite[Theorem 18]{Salvatore2004}, Randal-Williams \cite[Theorem 1.1]{Randal-Williams2013spheres}, and Ashraf, Azam, and Berceanu \cite[Lemma 6.1, Proposition 6.1]{AshrafAzamBerceanu2014} compute all the Betti numbers of $\Conf^n\C\P^1$ for $n \geq 1.$ Related computations are also done in \cite{BodigheimerCohenTaylor1989, FeichtnerZiegler2000, Napolitano2003}.

\subsection{Case of $\C\P^2$}
Recall that $\{1,x,x^2\}$ is a graded basis of $H^*(\C\P^2; \Q)$ with $|1| = 0$ and $|x| = 2.$ Set $x_i = x^i$ for $0 \leq i \leq 2.$ For $\r = (r_0,r_1) \in \Z^2$, we shall denote $\X^{\r} = \X_0^{r_0}\X_1^{r_1}.$ Since $|x_i|$ is even for $i = 0,1$ it follows that $r_i \in \{0,1\}$. Thus $B^{2p,q}(n) = \emptyset$ for $q \geq 3.$ From the description of $d$ given in Section 2, we see that 
\begin{eqnarray*}
d(\X^{(1,0)}) & = & 2X_2 + X_1^2,\\
d(\X^{(0,1)}) & = & 2X_1X_2, \quad \mbox{and}\\
d(\X^{(1,1)}) & = & 2X_2\X^{(0,1)} + X_1^2\X^{(0,1)} - 2X_1X_2\X^{(1,0)}.
\end{eqnarray*}

For $n \geq p$, $$B^{2p,0}(n) = \{X_1^p, X_1^{p-2}X_2\}$$ where $\ell(X_1^p) = p$ and $\ell(X_1^{p-2}X_2) = p-1.$ The following tables record the values of $\dim E_2^{2p,0}(n)$ for $p \geq 2$ and $0 \leq p \leq 1$, respectively.

\begin{center}
\begin{tabular}{c|c}
range of $n$ & $\dim E_2^{2p,0}(n)$  \\ \hline
$n \geq p$ & $2$\\
$n = p-1$ & $1$\\
$n \leq p-2$ & $0$
\end{tabular}
\end{center}

\begin{center}
\label{Tab:cp2dim0small}
\centering
\begin{tabular}{c|c|c}
range of $n$ & $\dim E_2^{0,0}(n)$ & $\dim E_2^{2,0}(n)$\\ \hline
$n \geq 1$ & $1$ & $1$
\end{tabular}
\end{center}

Since $E_2^{2p,-1}(n) = \{0\}$ for all $p$ and $n$, we see that $\rank d^{2p,0} = 0$.

For $n \geq p+2$, $$B^{2p,1}(n) = \{X_1^p\X^{(1,0)}, X_1^{p-2}X_1\X^{(1,0)}, X_1^{p-1}\X^{(0,1)}, X_1^{p-3}X_2\X^{(0,1)}\}$$ where $\ell(X_1^p\X^{(1,0)}) = p+2$, $\ell(X_1^{p-2}X_2\X^{(1,0)}) = \ell(X_1^{p-1}\X^{(0,1)}) = p+1,$ and $\ell(X_1^{p-3}X_2\X^{(0,1)}) = p.$ The following tables record the values of $\dim E_2^{2p,1}(n)$ for $p \geq 3$ and $0 \leq p \leq 2$, respectively.

\begin{center}
\begin{tabular}{c|c}
range of $n$ & $\dim E_2^{2p,1}(n)$\\ \hline
$n \geq p+2$ & $4$\\
$n = p+1$ & $3$\\
$n = p$ & $1$\\
$n \leq p-1$ & $0$
\end{tabular}
\end{center}

\begin{center}
\begin{tabular}{c|c|c|c}
range of $n$ & $\dim E_2^{0,1}(n)$ & $\dim E_2^{2,1}(n)$ & $\dim E_2^{4,1}(n)$\\ \hline
$n \geq 4$ & $3$ & $2$ & $1$\\
$n = 3$ & $2$ & $2$ & $1$\\
$n = 2$ & $0$ & $1$ & $1$\\
$n = 1$ & $0$ & $0$ & $0$
\end{tabular}
\end{center}

In order to compute $\rank d^{2p,1}$, we first compute $d^{2p,1}$ of each of the basis elements of $B^{2p,1}(n).$

\begin{center}
\begin{tabular}{l|l}
basis vector & image under $d^{2p,1}$\\ \hline
$X_1^p\X^{(1,0)}$ & $2X_1^pX_2 + X_1^{p+2}$\\
$X_1^{p-2}X_2\X^{(1,0)}$ & $X_1^pX_2$\\
$X_1^{p-1}\X^{(0,1)}$ & $2X_1^pX_2$\\
$X_1^{p-3}X_2\X^{(0,1)}$ & $0$
\end{tabular}
\end{center}

Thus counting the number of linearly independent vectors in the right column above, we obtain $\rank d^{2p,1}.$ The following tables record the values of $\rank d^{2p,1}$ for $p \geq 1$ and for $p = 0$, respectively.

\begin{center}
\begin{tabular}{c|c}
range of $n$ & $\rank d^{2p,1}$\\ \hline
$n \geq p+2$ & $2$\\
$n = p+1$ & $1$\\
$n \leq p$ & $0$
\end{tabular}
\end{center}

\begin{center}
\begin{tabular}{c|c}
range of $n$ & $\rank d^{0,1}$\\ \hline
$n \geq 2$ & $1$\\
$n = 1$ & $0$
\end{tabular}
\end{center}

For $n \geq p+3$, $$B^{2p,2}(n) = \left\{X_1^{p-1}\X^{(1,1)}, X_1^{p-3}X_2\X^{(1,1)}\right\}$$ where $\ell(X_1^{p-1}\X^{(1,1)}) = p+3$ and $\ell(X_1^{p-3}X_2\X^{(1,1)}) = p+2.$ The following tables record the values of $\dim E_2^{2p,2}(n)$ for $p \geq 3$ and for $0 \leq p \leq 2$, respectively.

\begin{center}
\begin{tabular}{c|c}
range of $n$ & $\dim E_2^{2p,2}(n)$\\ \hline
$n \geq p+3$ & $2$\\
$n = p+2$ & $1$\\
$n \leq p+1$ & $0$
\end{tabular}
\end{center}

\begin{center}
\begin{tabular}{c|c|c|c}
range of $n$ & $\dim E_2^{0,2}(n)$ & $\dim E_2^{2,2}(n)$ & $\dim E_2^{4,2}(n)$\\ \hline
$n \geq 5$ & $0$ & $1$ & $1$\\
$n = 4$ & $0$ & $1$ & $0$\\
$n \leq 3$ & $0$ & $0$ & $0$
\end{tabular}
\end{center}

To compute $\rank d^{2p,2}$, we first compute $d^{2p,2}$ of each of the elements of $B^{2p,2}(n).$

\begin{center}
\begin{tabular}{l|l}
basis vector & image under $d^{2p,2}$\\ \hline
$X_1^{p-1}\X^{(1,1)}$ & $2X_1^{p-1}X_2\X^{(0,1)} + X_1^{p+1}\X^{(0,1)} - 2X_1^pX_2\X^{(1,0)}$\\
$X_1^{p-3}X_2\X^{(1,1)}$ & $X_1^{p-1}X_2\X^{(0,1)}$
\end{tabular}
\end{center}

Thus counting the number of linearly independent vectors in the right column above, we obtain $\rank d^{2p,2}.$ The following tables record the values of $\rank d^{2p,2}$ for $p \geq 3$ and for $0 \leq p \leq 2$, respectively.

\begin{center}
\begin{tabular}{c|c}
range of $n$ & $\rank d^{2p,2}$\\ \hline
$n \geq p+3$ & $2$\\
$n = p+2$ & $1$\\
$n \leq p+1$ & $0$
\end{tabular}
\end{center}

\begin{center}
\begin{tabular}{c|c|c|c}
range of $n$ & $\rank d^{0,2}$ & $\rank d^{2,2}$ & $\rank d^{4,2}$\\ \hline
$n \geq 5$ & $0$ & $1$ & $1$\\
$n = 4$ & $0$ & $1$ & $0$\\
$n \leq 3$ & $0$ & $0$ & $0$
\end{tabular}
\end{center}

Now that we know $\dim E_2^{2p,q}(n)$ and $\rank d^{2p,q}$ for all $p \geq 0$ and $q = 0,1,2$, we can compute $\dim E_\infty^{2p,q}(n).$ Recall that $$\dim E_\infty^{2p,q}(n) = \dim E_2^{2p,q}(n) - \rank d^{2p,q} - \rank d^{2p-4, q+1}.$$

Following the formula above, we see that there are only finitely many values of $p$ and $q$ for which $\dim E_\infty^{2p,q}(n) \neq 0.$ We record these values in the following table.

\begin{center}
\begin{tabular}{c|c|c|c|c|c|c}
range of $n$ & $\dim E_\infty^{0,0}(n)$ & $\dim E_\infty^{2,0}(n)$ & $\dim E_\infty^{4,0}(n)$ & $\dim E_\infty^{4,1}(n)$ & $\dim E_\infty^{6,1}(n)$ & $\dim E_\infty^{8,1}(n)$\\ \hline
$n \geq 4$ & $1$ & $1$ & $1$ & $1$ & $1$ & $1$\\
$n = 3$ & $1$ & $1$ & $1$ & $1$ & $1$ & $0$\\
$n \leq 2$ & $1$ & $1$ & $1$ & $0$ & $0$ & $0$
\end{tabular}
\end{center}

Recall that $$b_i(n) = \sum_{2p+3q = i} \dim E_\infty^{2p,q}(n).$$ 

We have now proven the following proposition.

\begin{proposition}
\label{cp2betti}
The stable Betti numbers of $\Conf^n \C\P^2$ are $b_0 = b_2 = b_4 = b_7 = b_9 = b_{11} = 1$ and $b_i = 0$ for $i \neq 0,2,4,7,9,11$. Furthermore, the entire cohomology is given by 
$$
H^i(\Conf^n \C\P^2; \Q) = \begin{cases}
\Q & \mbox{if } i = 0,2,4\\
\Q & \mbox{if } n \geq 3, i = 7,9\\
\Q & \mbox{if } n \geq 4, i =11\\
0 & \mbox{else}
\end{cases}.
$$
\end{proposition}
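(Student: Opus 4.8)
The plan is to specialize the Cohen--Taylor--Totaro--Kriz machinery of Section 2 to $X = \C\P^2$ and carry out the resulting finite linear-algebra computation. By Theorem \ref{T:E2} the relevant object is $E_2(n) = R_{\C\P^2,\leq n}$, where $V_X$ is generated by the classes $X_1, X_2$ coming from $x, x^2$ and $W_X$ by $\X_0, \X_1, \X_2$ coming from $1, x, x^2$; by Theorem \ref{T:1D} the sequence has a single differential $d$, described explicitly in Section 2; and by Proposition \ref{top} we may pass to the quotient in which the top class is constrained by $r_2 \in \{0,1\}$ (so $X_2^2 = 0$) and $s_2 = 0$ (so $\X_2$ is removed). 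The crucial consequence is that only the two exterior generators $\X_0, \X_1$ survive, hence $q \leq 2$ and $B^{p,q}(n) = \emptyset$ for $q \geq 3$; this is what makes the whole computation finite. The strategy is then pure bookkeeping: compute $\dim E_2^{p,q}(n)$ and $\rank d^{p,q}$ for $q \in \{0,1,2\}$ and all $p$, assemble
\[
\dim E_\infty^{p,q}(n) = \dim E_2^{p,q}(n) - \rank d^{p,q} - \rank d^{p-4,q+1},
\]
and finally sum over $p + 3q = i$ (recall $D=4$, so $D-1 = 3$).

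First I would enumerate the monomial bases $B^{p,q}(n)$ degree by degree and read off $\dim E_2^{p,q}(n)$. The only subtlety is the length constraint $\ell \leq n$: since $X_1$ and $X_2$ each have length $1$ while each $\X_j$ has length $2$, every basis monomial has a definite length, and the values of $n$ below that length are exactly where a dimension drops. This produces the three-way splits ($n \geq p + c$, $n = p + c - 1$, smaller) recorded in the tables. Next I would apply the Section 2 differential to each basis monomial --- the governing relations being $d\X_0 = 2X_2 + X_1^2$, $d\X_1 = 2X_1X_2$, and $d(\X_0\X_1) = 2X_2\X_1 + X_1^2\X_1 - 2X_1X_2\X_0$, together with $X_2^2 = 0$ in the quotient --- and count linearly independent images to obtain $\rank d^{p,q}$. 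Again the rank drops precisely when the length bound annihilates one of the contributing monomials.

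With the $E_\infty$ table in hand the conclusion is immediate. The only surviving bigradings are $(0,0), (2,0), (4,0), (4,1), (6,1), (8,1)$, contributing to total degrees $i = p + 3q = 0, 2, 4, 7, 9, 11$ respectively. Because $H^*(\C\P^2;\Q)$ is concentrated in even degrees, $p$ is always even, and one checks that each of these six total degrees receives a contribution from exactly one bigrading; hence $b_i(n)$ equals that single $\dim E_\infty^{p,q}(n)$, with no cross-term cancellation to worry about. The stable values are all $1$, while the length thresholds force $H^7, H^9 \neq 0$ only for $n \geq 3$ and $H^{11} \neq 0$ only for $n \geq 4$, giving exactly the stated piecewise formula.

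I expect the main obstacle to be combinatorial care rather than conceptual difficulty: getting the length thresholds right simultaneously in the $\dim E_2^{p,q}$ counts and in the $\rank d^{p,q}$ counts, and in particular verifying linear independence of the differential images --- for instance that $\{2X_1^pX_2 + X_1^{p+2},\, X_1^pX_2\}$ genuinely has rank $2$ once $n$ is large, but rank $1$ or $0$ as the monomials $X_1^{p+2}$ or $X_1^pX_2$ fall outside the length bound for small $n$. Correctly tracking where each such drop occurs, and confirming that the boundary cases $n = 2, 3, 4$ behave as asserted, is the only place an error could realistically enter.
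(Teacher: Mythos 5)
Your proposal follows exactly the paper's own route: specialize the Cohen--Taylor--Totaro--Kriz $E_2$-page to $\C\P^2$, use the exact subcomplex of Proposition \ref{top} to kill $\X_2$ and $X_2^2$ so that $q \leq 2$ and the computation is finite, tabulate $\dim E_2^{2p,q}(n)$ and $\rank d^{2p,q}$ with the length thresholds, and assemble $E_\infty$ to find the six surviving bigradings $(0,0),(2,0),(4,0),(4,1),(6,1),(8,1)$ with the stated cutoffs at $n=3$ and $n=4$. The differentials, the identification of total degrees $0,2,4,7,9,11$, and the boundary analysis all match the paper's proof, so this is correct and essentially identical in approach.
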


These numbers have been previously computed. F\'{e}lix and Tanr\'{e} \cite[Theorem 2]{FelixTanre2005} computed the algebra structure of $H^*(\Conf^n\C\P^2; \Z)$ for all $n \geq 1.$ The stable Betti numbers of $\Conf^n\C\P^2$ prove Conjecture G of \cite{VakilWood2015} which was first noticed by Kupers and Miller \cite[Theorem 1.1]{KupersMiller2014} who computed the stable Betti numbers of $\Conf^n \C\P^2$ using McDuff's scanning map. Related computations are also done in \cite{Sohail2010, AshrafBerceanu2014}.

\subsection{Case of $\C\P^3$}
Recall that $H^\ast(\C\P^3; \Q) \simeq \Q[x]\slash (x^4)$ where $|1| = 0$ and $|x| = 2.$ Then if we set $x_i := x^i$ for $0 \leq i \leq 3,$ we see that $\{x_0, x_1, x_2, x_3\}$ is a graded basis of $\C\P^3.$ For $\r = (r_0, r_1, r_2) \in \Z^3,$ we shall denote $\X^\r = \X_0^{r_0}\X_1^{r_1}\X_2^{r_2}.$ Since $|x_i|$ is even for $i=0,1,2$, it follows that $r_i \in \{0,1\}$. Thus $B^{2p,q}(n) = \emptyset$ for $q \geq 4.$ From the description of $d$ given in Section 2, it follows that 
\begin{eqnarray*}
d\left(\X^{(1,0,0)}\right) & = & 2X_1X_2 + 2X_3,\\
d\left(\X^{(0,1,0)}\right) & = & 2X_1X_3 + X_2^2,\\
d\left(\X^{(0,0,1)}\right) & = & 2X_2X_3,\\
d\left(\X^{(1,1,0)}\right) & = & 2X_1X_2\X^{(0,1,0)} + 2X_3\X^{(0,1,0)} - 2X_1X_3\X^{(1,0,0)} - X_2^2\X^{(1,0,0)},\\
d\left(\X^{(1,0,1)}\right) & = & 2X_1X_2\X^{(0,0,1)} + 2X_3\X^{(0,0,1)} - 2X_2X_3\X^{(1,0,0)},\\
d\left(\X^{(0,1,1)}\right) & = & 2X_1X_3\X^{(0,0,1)} + X_2^2\X^{(0,0,1)} - 2X_2X_3\X^{(0,1,0)}, \mbox{~and}\\
d\left(\X^{(1,1,1)}\right) & = & 2X_1X_2\X^{(0,1,1)} + 2X_3\X^{(0,1,1)} - 2X_1X_3\X^{(1,0,1)} - X_2^2\X^{(1,0,1)} + 2X_2X_3\X^{(1,1,0)}.
\end{eqnarray*}

For $q = 0$, the elements of $B^{2p,0}(n)$ have no $q$-part. So we have two types of basis elements, those with an $X_3$ and those without an $X_3:$
\[
B^{2p,0}(n) = \left\{\begin{array}{l|l} X_1^{p-2j}X_2^j, & \max\{p-n,0\} \leq j \leq \floor*{\frac{p}{2}},\\ X_1^{p-3-2k}X_2^kX_3 & \max\{p-n-2,0\} \leq k \leq \floor*{\frac{p-3}{2}} \end{array}\right\}.
\]
The left-hand sides of the inequalities come from the length requirement on elements of $E_2(n)$, and the right-hand sides of the inequalities are due to the fact that the exponent of $X_1$ must be positive and integral. 

Since we have an explicit description of $B^{2p,0}(n)$, computing $\dim E_2^{2p,0}(n)$ is a just a matter of counting while taking into account the various cases presented by the inequalities, e.g. when $p - n \geq 0.$ The following tables record the values of $\dim E_2^{2p,0}(n)$ for $p \geq 4$ and for $0 \leq p \leq 3$, respectively.

\begin{center}
\begin{tabular}{c|c}
range of $n$ & $\dim E_2^{2p,0}(n)$ \\ \hline
$n \geq p$ & $p$\\
$p-1 \leq n \leq p-2$ & $n$\\
$\frac{p-2}{2} \leq n \leq p-3$ & $2n-p+2$\\
$n \leq \frac{p-3}{2}$ & $0$
\end{tabular}
\end{center}

\begin{center}
\begin{tabular}{c|c|c|c|c}
 range of $n$ & $\dim E_2^{0,0}(n)$ & $\dim E_2^{2,0}(n)$ & $\dim E_2^{4,0}(n)$ & $\dim E_2^{6,0}(n)$ \\ \hline
$n \geq 3$ & $1$ & $1$ & $2$ & $3$\\
$n = 2$ & $1$ & $1$ & $2$ & $2$\\
$n = 1$ & $1$ & $1$ & $1$ & $1$
\end{tabular}
\end{center}

Recall that for all $p$, $d^{2p,0}$ is the zero map, and so $\rank d^{2p,0} = 0$.

There are three possible $q$-parts for elements of $B^{2p,1}(n)$: $\X^{(1,0,0)},\: \X^{(0,1,0)},$ and $\X^{(0,0,1)}$. Thus when we further separate basis vectors by whether or not they have $X_3$ as a factor, we obtain six types of basis vectors:
\[
B^{2p,1}(n) = \left\{\begin{array}{l|l} X_1^{p-2j}X_2^j\X^{(1,0,0)}, & \max\{p-n+2,0\} \leq j \leq \floor*{\frac{p}{2}},\\ X_1^{p-3-2k}X_2^kX_3\X^{(1,0,0)}, & \max\{p-n,0\} \leq k \leq \floor*{\frac{p-3}{2}},\\
X_1^{p-1-2\ell}X_2^\ell\X^{(0,1,0)}, & \max\{p-n+1,0\} \leq \ell \leq \floor*{\frac{p-1}{2}},\\ X_1^{p-4-2r}X_2^rX_3\X^{(0,1,0)}, & \max\{p-n-1,0\} \leq r \leq \floor*{\frac{p-4}{2}},\\ X_1^{p-2-2s}X_2^s\X^{(0,0,1)}, & \max\{p-n,0\} \leq s \leq \floor*{\frac{p-2}{2}},\\ X_1^{p-5-2t}X_2^tX_3\X^{(0,0,1)} & \max\{p-n-2, 0\} \leq t \leq \floor*{\frac{p-5}{2}}
 \end{array}\right\}.
\]
Again, the left-hand sides of the inequalities are due to the length requirement, and the right-hand sides come from the necessity that the exponent of $X_1$ be positive and integral.

As in the case of $E_2^{2p,0}(n),$ since we have an explicit description of $B^{2p,1}(n),$ computing $\dim E_2^{2p,1}(n)$ is just a matter of counting while taking into account the various cases presented by the inequalities. The following tables record the values of $\dim E_2^{2p,1}(n)$ for $p \geq 6$ and for $0 \leq p \leq 5$, respectively.

\begin{center}
\begin{tabular}{c|c}
range of $n$ & $\dim E_2^{2p,1}$ \\ \hline
$n \geq p+2$ & $3p-3$\\
$n = p+1$ & $3p-4$\\
$n=p$ & $3p-6$\\
$n=p-1$ & $3p-10$\\
$\frac{p+2}{2} \leq n \leq p-2$ & $6n-3p-3$\\
$n = \frac{p+1}{2}$ & $1$\\
$n\leq \frac{p}{2}$ & $0$
\end{tabular}
\end{center}

\begin{center}
\begin{tabular}{c|c|c|c|c|c|c}
range of $n$ & $\dim E_2^{0,1}(n)$ & $\dim E_2^{2,1}(n)$ & $\dim E_2^{4,1}(n)$ & $\dim E_2^{6,1}(n)$ & $\dim E_2^{8,1}(n)$ & $\dim E_2^{10,1}(n)$\\ \hline
$n \geq 7$ & $1$ & $2$ & $4$ & $6$ & $9$ & $12$\\
$n = 6$ & $1$ & $2$ & $4$ & $6$ & $9$ & $11$\\
$n = 5$ & $1$ & $2$ & $4$ & $6$ & $8$ & $9$\\
$n = 4$ & $1$ & $2$ & $4$ & $5$ & $6$ & $5$\\
$n = 3$ & $1$ & $2$ & $3$ & $3$ & $2$ & $1$\\
$n = 2$ & $1$ & $1$ & $1$ & $0$ & $0$ & $0$\\
$n = 1$ & $0$ & $0$ & $0$ & $0$ & $0$ & $0$
\end{tabular}
\end{center}

To compute the rank of $d^{2p,1}$, we first compute $d^{2p,1}$ of each of the basis vectors of $B^{2p,1}(n).$

\begin{center}
\begin{tabular}{l|l}
basis vector & image under $d^{2p,1}$\\ \hline
$X_1^{p-2j}X_2^j\X^{(1,0,0)}$ & $2X_1^{p+1-2j}X_2^{j+1} + 2X_1^{p-2j}X_2^jX_3$\\
$X_1^{p-3-2k}X_2^kX_3\X^{(1,0,0)}$ & $2X_1^{p-2-2k}X_2^{k+1}X_3$\\
$X_1^{p-1-2\ell}X_2^{\ell}\X^{(0,1,0)}$ & $2X_1^{p-2\ell}X_2^{\ell}X_3 + X_1^{p-1-2\ell}X_2^{\ell+2}$\\
$X_1^{p-4-2r}X_2^rX_3\X^{(0,1,0)}$ & $X_1^{p-4-2r}X_2^{r+2}X_3$\\
$X_1^{p-2-2s}X_2^s\X^{(0,0,1)}$ & $2X_1^{p-2-2s}X_2^{s+1}X_3$\\
$X_1^{p-5-2t}X_2^tX_3\X^{(0,0,1)}$ & $0$
\end{tabular}
\end{center}

We then count the number of linearly independent vectors in the image of $d^{2p,1}$ to obtain $\rank d^{2p,1}.$ The following tables record the values of $\rank d^{2p,1}$ for $p \geq 2$ and for $p = 0,1$, respectively.

\begin{center}
\begin{tabular}{c|c}
range of $n$ & $\rank d^{2p,1}$\\ \hline
$n \geq p+2$ & $p+2$\\
$\frac{p+2}{2} \leq n \leq p+1$ & $2n-p-1$\\
$n \leq \frac{p+1}{2}$ & $0$
\end{tabular}
\end{center}

\begin{center}
\begin{tabular}{c|c|c}
range of $n$ & $\rank d^{0,1}$ & $\rank d^{2,1}$\\ \hline
$n \geq 3$ & $1$ & $2$\\
$n = 2$ & $1$ & $1$\\
$n = 1$ & $0$ & $0$
\end{tabular}
\end{center}

For elements of $B^{2p,2}(n)$, there are three possible $q$-parts: $\X^{(1,1,0)},$ $\X^{(1,0,1)}$, and $\X^{(0,1,1)}$. Thus when we further distinguish basis vectors by whether or not they have $X_3$ as a factor, we have six types of basis vectors:
\[
B^{2p,2}(n) = \left\{\begin{array}{l|l} X_1^{p-1-2j}X_2^j\X^{(1,1,0)}, & \max\{p-n+3,0\} \leq j \leq \floor*{\frac{p-1}{2}},\\ X_1^{p-4-2k}X_2^kX_3\X^{(1,1,0)}, & \max\{p-n+1,0\} \leq k \leq \floor*{\frac{p-4}{2}},\\
X_1^{p-2-2\ell}X_2^\ell\X^{(1,0,1)}, & \max\{p-n+2,0\} \leq \ell \leq \floor*{\frac{p-2}{2}},\\ X_1^{p-5-2r}X_2^rX_3\X^{(1,0,1)}, & \max\{p-n,0\} \leq r \leq \floor*{\frac{p-5}{2}},\\ X_1^{p-3-2s}X_2^s\X^{(0,1,1)}, & \max\{p-n+1,0\} \leq s \leq \floor*{\frac{p-3}{2}},\\ X_1^{p-6-2t}X_2^tX_3\X^{(0,1,1)} & \max\{p-n-1,0\} \leq t \leq \floor*{\frac{p-6}{2}} \end{array}\right\}.
\]

As we have done previously, we count the elements of $B^{2p,2}(n)$ to determine $\dim E_2^{2p,2}(n).$ The following tables record the values of $\dim E_2^{2p,2}(n)$ for $p \geq 7$ and for $0 \leq p \leq 6$, respectively.

\begin{center}
\begin{tabular}{c|c}
range of $n$ & $\dim E_2^{2p,2}(n)$\\ \hline
$n \geq p+3$ & $3p-6$\\
$n = p+2$ & $3p-7$\\
$n = p+1$ & $3p-9$\\
$n = p$ & $3p-13$\\
$\frac{p+5}{2} \leq n \leq p-1$ & $6n-3p-12$\\
$n = \frac{p+4}{2}$ & $1$\\
$n \leq \frac{p+3}{2}$ & $0$
\end{tabular}
\end{center}

\begin{center}
\begin{tabular}{c|c|c|c|c|c|c|c}
range of $n$ & $\dim E_2^{0,2}$ & $\dim E_2^{2,2}$ & $\dim E_2^{4,2}$ & $\dim E_2^{6,2}$ & $\dim E_2^{8,2}$ & $\dim E_2^{10,2}$ & $\dim E_2^{12,2}$\\ \hline
$n \geq 9$ & $0$ & $1$ & $2$ & $4$ & $6$ & $9$ & $12$\\
$n = 8$ & $0$ & $1$ & $2$ & $4$ & $6$ & $9$ & $11$\\
$n = 7$ & $0$ & $1$ & $2$ & $4$ & $6$ & $8$ & $9$\\
$n = 6$ & $0$ & $1$ & $2$ & $4$ & $5$ & $6$ & $5$\\
$n = 5$ & $0$ & $1$ & $2$ & $3$ & $3$ & $2$ & $1$\\
$n = 4$ & $0$ & $1$ & $1$ & $0$ & $0$ & $0$ & $0$\\
$n = 3$ & $0$ & $0$ & $0$ & $0$ & $0$ & $0$ & $0$
\end{tabular}
\end{center}

To compute the rank of $d^{2p,2},$ we first compute $d^{2p,2}$ of each  of the elements of $B^{2p,2}(n).$

\begin{center}
\begin{tabular}{l|l}
basis vector & image under $d^{2p,2}$\\ \hline
$X_1^{p-1-2j}X_2^j\X^{(1,1,0)}$ & $~ 2X_1^{p-2j}X_2^{j+1}\X^{(0,1,0)} + 2X_1^{p-1-2j}X_2^jX_3\X^{(0,1,0)}$\\
~ & $- 2X_1^{p-2j}X_2^jX_3\X^{(1,0,0)} - X_1^{p-1-2j}X_2^{j+2}\X^{(1,0,0)}$\\
$X_1^{p-4-2k}X_2^kX_3\X^{(1,1,0)}$ & $~ 2X_1^{p-3-2k}X_2^{k+1}X_3\X^{(0,1,0)} - X_1^{p-4-2k}X_2^{k+2}X_3\X^{(1,0,0)}$\\
$X_1^{p-2-2\ell}X_2^\ell\X^{(1,0,1)}$ & $~ 2X_1^{p-1-2\ell}X_2^{\ell+1}\X^{(0,0,1)} + 2X_1^{p-2-2\ell}X_2^\ell X_3\X^{(0,0,1)}$\\
~ & $- 2X_1^{p-2-2\ell}X_2^{\ell+1}X_3\X^{(1,0,0)}$\\
$X_1^{p-5-2r}X_2^rX_3\X^{(1,0,1)}$ & $~ 2X_1^{p-4-2r}X_2^{r+1}X_3\X^{(0,0,1)}$\\
$X_1^{p-3-2s}X_2^s\X^{(0,1,1)}$ & $~ 2X_1^{p-2-2s}X_2^sX_3\X^{(0,0,1)} + X_1^{p-3-2s}X_2^{s+2}\X^{(0,0,1)}$\\
~ & $- 2X_1^{p-3-2s}X_2^{s+1}X_3\X^{(0,1,0)}$\\
$X_1^{p-6-2t}X_2^tX_3\X^{(0,1,1)}$ & $~ X_1^{p-6-2t}X_2^{t+2}X_3\X^{(0,0,1)}$
\end{tabular}
\end{center}

We then count the number of linearly independent vectors in the image of $d^{2p,2}$ to obtain $\rank d^{2p,2}.$ The following tables record the values of $\rank d^{2p,2}$ for $p \geq 5$ and for $0 \leq p \leq 4$, respectively.

\begin{center}
\begin{tabular}{c|c}
range of $n$ & $\rank d^{2p,2}$\\ \hline
$n \geq p+3$ & $2p-1$\\
$n = p+2$ & $2p-2$\\
$n = p+1$ & $2p-4$\\
$\frac{p+5}{2} \leq n \leq p$ & $4n-2p-8$\\
$n = \frac{p+4}{2}$ & $1$\\
$n \leq \frac{p+3}{2}$ & $0$
\end{tabular}
\end{center}

\begin{center}
\begin{tabular}{c|c|c|c|c|c}
range of $n$ & $\rank d^{0,2}$ & $\rank d^{2,2}$ & $\rank d^{4,2}$ & $\rank d^{6,2}$ & $\rank d^{8,2}$\\ \hline
$n \geq 10$ & $0$ & $1$ & $2$ & $4$ & $6$\\
$n = 9$ & $0$ & $1$ & $2$ & $4$ & $6$\\
$n = 8$ & $0$ & $1$ & $2$ & $4$ & $6$\\
$n = 7$ & $0$ & $1$ & $2$ & $4$ & $6$\\
$n = 6$ & $0$ & $1$ & $2$ & $4$ & $5$\\
$n = 5$ & $0$ & $1$ & $2$ & $3$ & $3$\\
$n = 4$ & $0$ & $1$ & $1$ & $0$ & $0$\\
$n\leq 3$ & $0$ & $0$ & $0$ & $0$ & $0$
\end{tabular}
\end{center}

There is one possible $q$-part for elements of $B^{2p,3}(n)$, namely $\X^{(1,1,1)}.$  Thus the basis vectors of $B^{2p,3}(n)$ are differentiated by whether or not they have an $X_3$:
\[B^{2p,3}(n) = \left\{\begin{array}{l|l} X_1^{p-3-2j}X_2^j\X^{(1,1,1)}, & \max\{p-n+3,0\} \leq j \leq \floor*{\frac{p-3}{2}},\\ X_1^{p-6-2k}X_2^kX_3\X^{(1,1,1)} & \max\{p-n+1, 0\} \leq k \leq \floor*{\frac{p-6}{2}} \end{array}\right\}. \]

To determine $\dim E_2^{2p,3}(n)$, we count the elements of $B^{2p,3}(n).$ The following tables record the values of $\dim E_2^{2p,3}(n)$ for $p \geq 6$ and for $0 \leq p \leq 5$, respectively.

\begin{center}
\begin{tabular}{c|c}
range of $n$ & $\dim E_2^{2p,3}(n)$\\ \hline
$n \geq p+3$ & $p-3$\\
$n = p+2$ & $p-4$\\
$\frac{p+8}{2} \leq n \leq p+1$ & $2n-p-7$\\
$n \leq \frac{p+7}{2}$ & $0$
\end{tabular}
\end{center}

\begin{center}
\begin{tabular}{c|c|c|c|c|c|c}
range of $n$ & $\dim E_2^{0,3}(n)$ & $\dim E_2^{2,3}(n)$ & $\dim E_2^{4,3}(n)$ & $\dim E_2^{6,3}(n)$ & $\dim E_2^{8,3}(n)$ & $\dim E_2^{10,3}(n)$\\ \hline
$n \geq 8$ & $0$ & $0$ & $0$ & $1$ & $1$ & $2$\\
$n = 7$ & $0$ & $0$ & $0$ & $1$ & $1$ & $1$\\
$n = 6$ & $0$ & $0$ & $0$ & $1$ & $0$ & $0$\\
$n \leq 5$ & $0$ & $0$ & $0$ & $0$ & $0$ & $0$
\end{tabular}
\end{center}

Next we compute $d^{2p,3}$ of each of the basis elements of $B^{2p,3}(n).$

\begin{center}
\begin{tabular}{l|l}
basis vector & image under $d^{2p,3}$\\ \hline
$X_1^{p-3-2j}X_2^j\X^{(1,1,1)}$ & $~ 2X_1^{p-2-2j}X_2^{j+1}\X^{(0,1,1)} + 2X_1^{p-3-2j}X_2^jX_3\X^{(0,1,1)}$\\
~ & $- 2X_1^{p-2-2j}X_2^jX_3\X^{(1,0,1)} - X_1^{p-3-2j}X_2^{j+2}\X^{(1,0,1)}$\\
~ & $+ 2X_1^{p-3-2j}X_2^{j+1}X_3\X^{(1,1,0)}$\\
$X_1^{p-6-2k}X_2^kX_3\X^{(1,1,1)}$ & $~ 2X_1^{p-5-2k}X_2^{k+1}X_3\X^{(0,1,1)} - X_1^{p-6-2k}X_2^{k+2}X_3\X^{(1,0,1)}$
\end{tabular}
\end{center}

Note that $d^{2p,3}\left(B^{2p,3}(n)\right)$ is a linearly independent set, so $\rank d^{2p,3} = \dim E_2^{2p,3}(n)$ for all $p$. The following tables record the values of $\rank d^{2p,3}$ for $p \geq 6$ and for $0 \leq p \leq 5$, respectively.

\begin{center}
\begin{tabular}{c|c}
range of $n$ & $\rank d^{2p,3}$\\ \hline
$n \geq p+3$ & $p-3$\\
$n = p+2$ & $p-4$\\
$\frac{p+8}{2} \leq n \leq p+1$ & $2n-p-7$\\
$n \leq \frac{p+7}{2}$ & $0$
\end{tabular}
\end{center}

\begin{center}
\begin{tabular}{c|c|c|c|c|c|c}
range of $n$ & $\rank d^{0,3}$ & $\rank d^{2,3}$ & $\rank d^{4,3}$ & $\rank d^{6,3}$ & $\rank d^{8,3}$ & $\rank d^{10,3}$\\ \hline
$n \geq 8$ & $0$ & $0$ & $0$ & $1$ & $1$ & $2$\\
$n = 7$ & $0$ & $0$ & $0$ & $1$ & $1$ & $1$\\
$n = 6$ & $0$ & $0$ & $0$ & $1$ & $0$ & $0$\\
$n \leq 5$ & $0$ & $0$ & $0$ & $0$ & $0$ & $0$
\end{tabular}
\end{center}

Now that we know $\dim E_2^{2p,q}(n)$ and $\rank d^{2p,q}$ for all $p \geq 0$ and $q = 0,1,2,3$, we can compute $\dim E_\infty^{2p,q}(n)$. Recall that $$\dim E_\infty^{2p,q}(n) = \dim E_2^{2p,q}(n) - \rank d^{2p,q} - \rank d^{2p-6,q+1}.$$ The following tables record the values of $\dim E_\infty^{2p,0}(n)$ for $p \geq 5$ and for $0 \leq p \leq 4$, respectively.

\begin{center}
\begin{tabular}{c|c}
range of $n$ & $\dim E_\infty^{2p,0}(n)$\\ \hline
$n \geq p$ & $1$\\
$n \leq p-1$ & $0$
\end{tabular}
\end{center}

\begin{center}
\begin{tabular}{c|c|c|c|c|c}
range of $n$ & $\dim E_\infty^{0,0}(n)$ & $\dim E_\infty^{2,0}(n)$ & $
\dim E_\infty^{4,0}(n)$ & $\dim E_\infty^{6,0}(n)$ & $\dim E_\infty^{8,0}(n)$\\ \hline
$n \geq 4$ & $1$ & $1$ & $2$ & $2$ & $2$\\
$n = 3$ & $1$ & $1$ & $2$ & $2$ & $1$\\
$n = 2$ & $1$ & $1$ & $2$ & $1$ & $1$\\
$n = 1$ & $1$ & $1$ & $1$ & $1$ & $0$
\end{tabular}
\end{center}

The following tables record the values of $\dim E_\infty^{2p,1}(n)$ for $p \geq 8$ and for $0 \leq p \leq 7$, respectively.

\begin{center}
\begin{tabular}{c|c}
range of $n$ & $\dim E_\infty^{2p,1}(n)$, \\ \hline
 $n \geq p$ & $2$\\
 $n = p-1$ & $1$\\
 $n \leq p-2$ & $0$
\end{tabular}
\end{center}

\begin{center}
\begin{tabular}{c|c|c|c|c|c|c|c|c}
range of $n$ & $\dim E_\infty^{0,1}$ & $\dim E_\infty^{2,1}$ & $\dim E_\infty^{4,1}$ & $\dim E_\infty^{6,1}$ & $\dim E_\infty^{8,1}$ & $\dim E_\infty^{10,1}$ & $\dim E_\infty^{12,1}$ & $\dim E_\infty^{14,1}$\\ \hline
$n \geq 7$ & $0$ & $0$ & $0$ & $1$ & $2$ & $3$ & $3$ & $3$\\
$n = 6$ & $0$ & $0$ & $0$ & $1$ & $2$ & $3$ & $3$ & $2$\\
$n = 5$ & $0$ & $0$ & $0$ & $1$ & $2$ & $3$ & $2$ & $1$\\
$n = 4$ & $0$ & $0$ & $0$ & $1$ & $2$ & $2$ & $1$ & $1$\\
$n = 3$ & $0$ & $0$ & $0$ & $1$ & $1$ & $1$ & $0$ & $0$\\
$n \leq 2$ & $0$ & $0$ & $0$ & $0$ & $0$ & $0$ & $0$ & $0$
\end{tabular}
\end{center}

The following table records the values of $\dim E_\infty^{2p,2}(n)$ for $p \geq 7.$

\begin{center}
\begin{tabular}{c|c}
range of $n$ & $\dim E_\infty^{2p,2}(n)$\\ \hline
$n \geq p-1$ & $1$\\
$n \leq p-2$ & $0$
\end{tabular}
\end{center}

For $0 \leq p \leq 6$ and $n \geq 1$, $\dim E_\infty^{2p,2}(n) = 0$. Since $\dim E_2^{2p,3}(n) = \rank d^{2p,3}$ and $d^{2p,4}$ is the zero map, it follows that $\dim E_\infty^{2n,3}(n) = 0$ for all $n$ and $p$.

Recall that for $i$ even, $$b_i(n) = \dim E_\infty^{i,0}(n) + \dim E_\infty^{i-10,2}(n).$$
and for $i$ odd, $$b_i(n) = \dim E_\infty^{i-5,1}(n) + \dim E_\infty^{i-15,3}(n).$$

We have now proven the following proposition, as well as Proposition \ref{cp3stableinstability}.

\begin{proposition}
\label{cp3betti}
The stable Betti numbers of $\Conf^n \C\P^3$ are $b_i = 2$ for $i \geq 23$. Furthermore, the entire cohomology is given by
$$
H^i(\Conf^n \C\P^3; \Q) = \begin{cases}
\Q^3 & \mbox{if } n \geq \frac{i-5}{2}, i =15,17,19\\
\Q^2 & \mbox{if } n = \frac{i-7}{2}, i = 15,17,19\\
\Q^2 & \mbox{if } n \geq \frac{i}{2}, i\mbox{ even }, i \geq 24 \mbox{ or } i = 4,8\\
\Q^2 & \mbox{if } n\geq \frac{i-5}{2}, i \mbox{ odd }, i \geq 21 \mbox{ or } i = 13,21\\
\Q & \mbox{if } \frac{i-12}{2} \leq n \leq \frac{i-2}{2}, i \mbox{ even } i \geq 24\\
\Q & \mbox{if } n \geq \frac{i}{2}, i \mbox{ even }, 10 \leq i \leq 22,\\
\Q & \mbox{if } i = 0,2\\
\Q & \mbox{if } n = 1, i=4\\
\Q & \mbox{if } n =1,2, i = 6\\
\Q & \mbox{if } n = 2,3, i =8\\
\Q & \mbox{if } n = \frac{i-7}{2}, i \geq 21 \mbox{ or } i =13\\
\Q & \mbox{if } n = \frac{i-9}{2}, i = 15,17,19\\
\Q & \mbox{if } n = \frac{i-11}{2}, i = 15,19\\
\Q & \mbox{if } n \geq 3, i = 11\\
0 & \mbox{else}
\end{cases}.
$$
\end{proposition}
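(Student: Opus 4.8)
The plan is to read off the complete cohomology of $\Conf^n\C\P^3$ directly from the dimension and rank data assembled above. Since $\C\P^3$ has real dimension $D=6$, the unique nontrivial differential (Theorem \ref{T:1D}) is $d^{2p,q}\colon E_2^{2p,q}(n)\to E_2^{2p+6,q-1}(n)$, and collapse after one page gives
$$
\dim E_\infty^{2p,q}(n)=\dim E_2^{2p,q}(n)-\rank d^{2p,q}-\rank d^{2p-6,q+1}(n).
$$
Because $H^*(\C\P^3;\Q)$ is concentrated in even degrees, every generator $\X_i$ of $W_X$ has odd total degree and hence $s_i\in\{0,1\}$; after quotienting by the exact subcomplex of Proposition \ref{top} we moreover have $s_3=0$ and $r_3\in\{0,1\}$. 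Thus $q=s_0+s_1+s_2\in\{0,1,2,3\}$, only four rows of the page are nonzero, and the tables computed above already contain every ingredient needed.

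I would proceed row by row in $q$. For each fixed $q$ the basis $B^{2p,q}(n)$ splits according to its admissible $q$-parts (the product of $\X$-variables) and according to whether the factor $X_3$ is present, producing the explicit families listed above; counting these families subject to the length bound $\ell\le n$ and to positivity of the $X_1$-exponent yields $\dim E_2^{2p,q}(n)$ as a piecewise-linear function of $p$ and $n$. The genuinely non-mechanical input is the rank of each $d^{2p,q}$: one applies $d$ to every basis vector (as tabulated) and must then count linearly independent images. For $q=3$ the images of the two families are manifestly independent, so $\rank d^{2p,3}=\dim E_2^{2p,3}(n)$ and the \emph{entire $q=3$ row dies}; for $q=1$ and $q=2$ one must track cancellations among monomials such as $X_1^aX_2^bX_3$ with care, and this linear-independence bookkeeping is where I expect the main effort to lie.

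With all $\dim E_\infty^{2p,q}(n)$ in hand, the remaining step is purely additive. Writing $i=2p+5q$ in $H^i=\bigoplus_{2p+5q=i}E_\infty^{2p,q}(n)$ and using $5q\equiv q\pmod 2$, even $i$ forces $q\in\{0,2\}$ and odd $i$ forces $q\in\{1,3\}$, so
$$
b_i(n)=\dim E_\infty^{i,0}(n)+\dim E_\infty^{i-10,2}(n)\ \ (i\text{ even}),\qquad b_i(n)=\dim E_\infty^{i-5,1}(n)+\dim E_\infty^{i-15,3}(n)\ \ (i\text{ odd}).
$$
Since the $q=3$ row vanishes, every odd Betti number comes from the $q=1$ row and every even one from the $q=0$ and $q=2$ rows. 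In the stable range the rows stabilize to the values $1,2,1,0$ for $q=0,1,2,3$, so $b_i=1+1=2$ for large even $i$ and $b_i=2+0=2$ for large odd $i$, recovering $b_i=2$ for $i\ge 23$; the small-$p$ and transition entries of the tables then produce the unstable cases.

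The real obstacle is not any single computation but the bookkeeping of matching every boundary range of $n$ against the piecewise formulas so that the stated case list emerges exactly. In particular the delicate windows $i=15,17,19$ (where the $q=1$ row contributes $\Q^2$ or $\Q^3$ and accounts for the non-vanishing stable instability of Proposition \ref{cp3stableinstability}) and the even range $i\ge 24$ with its single-$\Q$ transition band $\tfrac{i-12}{2}\le n\le\tfrac{i-2}{2}$ require carefully lining up the stabilization thresholds of the $q=0$ and $q=2$ rows. I would finish by verifying each such transition against the tabulated values (and, as an independent check, against the Sage output), which simultaneously yields the stable-range claim of Proposition \ref{cp3stableinstability}.
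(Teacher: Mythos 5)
Your proposal follows exactly the paper's method: enumerate the bases $B^{2p,q}(n)$ for $q=0,1,2,3$ according to $q$-part and the presence of $X_3$, count to get $\dim E_2^{2p,q}(n)$ and $\rank d^{2p,q}$ as piecewise functions of $p$ and $n$, and assemble $b_i(n)=\dim E_\infty^{i,0}(n)+\dim E_\infty^{i-10,2}(n)$ for $i$ even and $b_i(n)=\dim E_\infty^{i-5,1}(n)+\dim E_\infty^{i-15,3}(n)$ for $i$ odd, with the $q=3$ row killed entirely and the stable row values $1,2,1,0$ yielding $b_i=2$ for $i\ge 23$ --- all of which is precisely what the paper's Section 4.3 does. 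The only caveat is that your write-up defers the explicit dimension and rank tables and the linear-independence checks for $q=1,2$ to ``the tables computed above,'' and that bookkeeping is where essentially the whole of the paper's proof resides.
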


The stable Betti numbers of $\Conf^n\C\P^3$ prove Conjecture H of \cite{VakilWood2015}. F\'{e}lix and Thomas \cite[Section 3.2]{FelixThomas2000} computed $H_i(\Conf^3\C\P^3; \Q)$ for $0 \leq i \leq 15,$ and later Ashraf and Berceanu \cite[Theorem 1.4]{AshrafBerceanu2014} computed the cohomology algebra $H^*(\Conf^3\C\P^3;\Q).$ But Kupers and Miller \cite[Theorem 1.1]{KupersMiller2014} were the first to verify Conjecture H by computing all the stable Betti numbers of $\Conf^n\C\P^3$ using McDuff's scanning map.

\subsection{Genus 1 Riemann surface}
Recall that $\{1,a,b,t\}$ is a graded basis of $H^*(\Sigma_1; \Q)$ where $|a| = |b| = 1,$ $|t| = 2$, and $ab = -ba = t.$ An arbitrary basis vector of $E_2^{p,q}(n)$ is of the form $A^iB^jT^k\1^\ell\A^r\B^s$ where $i+j+r+s+k = p$, $\ell +r+s = q$, and $i+j+k+2\ell+2r+2s \leq n.$ But since $|1|=0$ is even and $|a|=|b|=1$ is odd, it follows that $i,j,\ell \in \{0,1\}.$ Furthermore, since $t$ is the orientation class of $H^*(X; \Q)$, we have that $k \in \{0,1\}$. Thus for a fixed $q$, it follows that $E_2^{p,q}(n) = \{0\}$ for $p \leq q-2$ and $p \geq q+5.$ 

From the description of $d$ given in Section 2, we have that
\begin{eqnarray*}
d\left(\1\A^j\B^{q-1-j}\right) & = & 2T\A^j\B^{q-1-j} - 2AB\A^j\B^{q-1-j} - 2jAT\1\A^{j-1}\B^{q-1-j}\\
&~& - 2(q-1-j)T\1\A^j\B^{q-2-j} \quad \mbox{and}\\
d\left(\A^k\B^{q-k}\right) & = & -2kAT\A^{k-1} - 2(q-k)BT\A^k\B^{q-1-k}.
\end{eqnarray*}

For $q \geq 1$ and $n\geq 2q$,
$$B^{q-1,q}(n) = \{\1\A^j\B^{q-j-1} : 0 \leq j \leq q-1\}.$$
Since $\ell(\1\A^j\B^{q-1-j}) = 2q$, we see that $B^{q-1,q}(n) = \emptyset$ for $n \leq 2q-1$.

The following table records the values of $\dim E_2^{q-1,q}(n)$ for $q \geq 1.$

\begin{center}
\begin{tabular}{c|c}
range of $n$ & $\dim E_2^{q-1,q}(n)$\\ \hline
$n \geq 2q$ & $q$\\
$n \leq 2q-1$ & $0$
\end{tabular}
\end{center}

To compute the rank of $d^{q-1,q}$, we compute the image of each basis vector of $B^{q-1,q}(n)$.

\begin{center}
\begin{tabular}{l|l}
basis vector & image under $d^{q-1,q}$\\ \hline
$\1\A^j\B^{q-1-j}$ & $2T\A^j\B^{q-1-j} - 2AB\A^j\B^{q-1-j} - 2jAT\1\A^{j-1}\B^{q-1-j}$\\
~ & $- 2(q-1-j)BT\1\A^j\B^{q-2-j}$
\end{tabular}
\end{center}

We count the number of linearly independent vectors in the rightmost column above to obtain $\rank d^{q-1,q}.$ The following table records the values of $d^{q-1,q}$ for $q \geq 1.$

\begin{center}
\begin{tabular}{c|c}
range of $n$ & $\rank d^{q-1,q}$  \\ \hline
$n \geq 2q$ & $q$\\
$n \leq 2q-1$ & $0$
\end{tabular}
\end{center}

For $q \geq 1$ and $n \geq 2q+1$,
$$B^{q,q}(n) = \{ A\1\A^j\B^{q-1-j}, B\1\A^j\B^{q-1-j}, \A^k\B^{q-k} : 0 \leq j \leq q-1, 0 \leq k \leq q\},$$
where $\ell(A\1\A^j\B^{q-1-j}) = \ell(B\1\A^j\B^{q-1-j}) = 2q+1$ and $\ell(\A^k\B^{q-k}) = 2q$. The following table records the values of $\dim E_2^{q,q}(n)$ for $q \geq 1.$

\begin{center}
\begin{tabular}{c|c}
range of $n$ & $\dim E_2^{q,q}(n)$\\ \hline
$n \geq 2q+1$ & $3q+1$\\
$n = 2q$ & $q+1$\\
$n \leq 2q-1$ & $0$
\end{tabular}
\end{center}

To compute the rank of $d^{q,q}$, we first compute the image of each element of $B^{q,q}(n)$.

\begin{center}
\begin{tabular}{l|l}
basis vector & image under $d^{q,q}$\\ \hline
$A\1\A^j\B^{q-1-j}$ & $2(q-1-j)ABT\1\A^j\B^{q-2-j} - 2AT\A^j\B^{q-1-j}$\\
$B\1\A^j\B^{q-1-j}$ & $-2jABT\A^{j-1}\B^{q-1-j} - 2BT\A^j\B^{q-1-j}$\\
$\A^k\B^k$ & $-2kAT\A^{k-1}\B^{q-k} - 2(q-k)BT\A^k\B^{q-1-k}$
\end{tabular}
\end{center}

Thus counting the number or linearly independent vectors in the image, we obtain $\rank d^{q,q}.$ The following table records the values of $\rank d^{q,q}$ for $q \geq 1.$

\begin{center}
\begin{tabular}{c|c}
range of $n$ & $\rank d^{q,q}$  \\ \hline
$n \geq 2q+1$ & $2q$\\
$n = 2q$ & $q+1$\\
$n \leq 2q-1$ & $0$
\end{tabular}
\end{center}

For $q \geq 1$ and $n \geq 2q+2$, we have
$$B^{q+1,q}(n) = \{AB\1\A^j\B^{q-1-j}, T\1\A^j\B^{q-1-j}, A\A^k\B^{q-k}, B\A^k\B^{q-k} : 0 \leq j \leq q-1, 0\leq k \leq q\},$$ where $\ell(AB\1\A^j\B^{q-1-j}) = 2q+2$ and $\ell(T\1\A^j\B^{q-1-j}) = \ell(A\A^k\B^{q-k}) = \ell(B\A^k\B^{q-k}) = 2q+1$. The following table records the values of $\dim E_2^{q+1,q}(n)$ for $ q \geq 1.$

\begin{center}
\begin{tabular}{c|c}
range of $n$ & $\dim E_2^{q+1,q}(n)$\\ \hline
$n \geq 2q+2$ & $4q+2$\\
$n = 2q+1$ & $3q+2$\\
$n \leq 2q$ & $0$
\end{tabular}
\end{center}

We compute the image under $d^{q+1,q}$ of each element of $B^{q+1,q}$.

\begin{center}
\begin{tabular}{l|l}
basis vector & image under $d^{q+1,q}$\\ \hline
$AB\1\A^j\B^{q-1-j}$ & $2ABT\A^j\B^{q-1-j}$\\
$T\1\A^j\B^{q-1-j}$ & $-2ABT\A^j\B^{q-1-j}$\\
$A\A^k\B^{q-k}$ & $-2kABT\A^{k-1}\B^{q-k}$\\
$B\A^k\B^{q-k}$ & $2(q-k)ABT\A^k\B^{q-k}$
\end{tabular}
\end{center}

Thus counting the number of linearly independent vectors in the image, we obtain $\rank d^{q+1,q}.$ The following table records the values of $d^{q+1,q}$ for $q \geq 1.$

\begin{center}
\begin{tabular}{c|c}
range of $n$ & $\rank d^{q+1,q}$\\ \hline
$n \geq 2q+1$ & $q$\\
$n \leq 2q$ & $0$
\end{tabular}
\end{center}

For $q \geq 1$ and $n \geq 2q+2$, we have
$$B^{q+2,q}(n) = \{AT\1\A^j\B^{q-1-j}, BT\1\A^j\B^{q-1-j}, AB\A^k\B^{q-k}, T\A^k\B^{q-k}: 0\leq j \leq q-1, 0 \leq k \leq q\},$$
where $\ell(AT\1\A^j\B^{q-1-j}) = \ell(BT\1\A^j\B^{q-1-j}) = \ell(AB\A^k\B^{q-k}) = 2q+2$ and $\ell(T\A^k\B^{q-k}) = 2q+1$. The following table records the values of $\dim E_2^{q+2,q}(n)$ for $q \geq 1.$

\begin{center}
\begin{tabular}{c|c}
range of $n$ & $\dim E_2^{q+2,q}(n)$\\ \hline
$n \geq 2q+2$ & $4q+2$\\
$n = 2q+1$ & $q+1$\\
$n \leq 2q$ & $0$
\end{tabular}
\end{center}

Since $d^{q+2,q}: E_2^{q+2,q}(n) \rightarrow E_2^{q+4,q-1}(n) = \{0\}$, it follows that $\rank d^{q+2,q} = 0$ for all $q \geq 0$ and $n \geq 1.$

For $q \geq 1$ and $n \geq 2q+3$, we have
$$B^{q+3,q}(n) = \{ABT\1\A^j\B^{q-1-j}, AT\A^k\B^{q-k}, BT\A^k\B^{q-k} : 0 \leq j \leq q-1, 0 \leq q \leq k\},$$ where $\ell(ABT\1\A^j\B^{q-1-j}) = 2q+3$ and $\ell(AT\A^k\B^{q-k}) = \ell(BT\A^k\B^{q-k}) = 2q+2$. The following table records the values of $\dim E_2^{q+3,q}(n)$ for $q \geq 1.$

\begin{center}
\begin{tabular}{c|c}
range of $n$ & $\dim E_2^{q+3,q}(n)$\\ \hline
$n \geq 2q+3$ & $3q+2$\\
$n = 2q+2$ & $2q+2$\\
$n \leq 2q+1$ & $0$
\end{tabular}
\end{center}

Since $d^{q+3,q}: E_2^{q+3,q}(n) \rightarrow E_2^{q+5,q-1}(n) = \{0\},$ we see that $\rank d^{q+3,q} = 0$ for all $q \geq 0$ and $n \geq 1.$

For $q \geq 1$ and $n \geq 2q+3$, we have
$$B^{q+4,q} = \{ABT\A^k\B^{q-k} : 0\leq k \leq q\},$$ where $\ell(ABT\A^k\B^{q-k}) = 2q+3$. The following table records the values of $\dim E_2^{q+4,q}(n)$ for $q \geq 1$.

\begin{center}
\begin{tabular}{c|c}
 range of $n$ & $\dim E_2^{q+4,q}(n)$ \\ \hline
$n \geq 2q+3$ & $q+1$\\
$n \leq 2q+2$ & $0$
\end{tabular}
\end{center}

Since $d^{q+4,q}: E_2^{q+4,q}(n) \rightarrow E_2^{q+6, q-1} = \{0\}$, it follows that $\rank d^{q+4,q} = 0$ for all $q \geq 0$ and $n \geq 1.$

We have yet to consider the $q=0$ case. For $q = 0$ and $n \geq 3$, we have
\begin{eqnarray*}
B^{0,0}(n) & = & \{1\},\\
B^{1,0}(n) & = & \{A,B\},\\
B^{2,0}(n) & = & \{AB, T\},\\
B^{3,0}(n) & = & \{AT, BT\},\quad \mbox{and}\\
B^{4,0}(n) & = & \{ABT\},
\end{eqnarray*}
where $\ell(1) = \ell(A) = \ell(B) = \ell(T) = 1$, $\ell(AB) = \ell(AT) = \ell(BT) = 2$, and $\ell(ABT) = 3.$ From this we obtain the following table which records the values of $\dim E_2^{p,0}(n)$ for $0 \leq p \leq 4.$

\begin{center}
\begin{tabular}{c|c|c|c|c|c}
range of $n$ & $\dim E_2^{0,0}(n)$ & $\dim E_2^{1,0}(n)$ & $\dim E_2^{2,0}(n)$ & $\dim E_2^{3,0}(n)$ & $\dim E_2^{4,0}(n)$\\ \hline
$n \geq 3$ & $1$ & $2$ & $2$ & $2$ & $1$\\
$n = 2$ & $1$ & $2$ & $2$ & $2$ & $0$\\
$n = 1$ & $1$ & $2$ & $1$ & $0$ & $0$
\end{tabular}
\end{center}

Since $E_2^{p,-1}(n) = \{0\}$, we have that $\rank d^{p,0} = 0$ for all $p \geq 0$ and $n \geq 1.$

Now that we know $\dim E_2^{p,q}(n)$ and $\rank d^{p,q}$, we can compute $\dim E_\infty(n)$ for all $q \geq 0$ and $q-1 \leq p \leq q+4.$ Recall that $$\dim E_\infty^{p,q}(n) = \dim E_2^{p,q}(n) - \rank d^{p,q} - \rank d^{p-2, q+1}.$$ Since $\dim E_2^{q-1,q}(n) = \rank d^{q-1,q}$, we have that $\dim E_\infty^{q-1,q}(n) = 0$ for all $q \geq 0$ and $n \geq 1.$ Additionally, since $\dim E_2^{q+4,q}(n) = \rank d^{q+2, q+1}$, it follows that $\dim E_\infty^{q+4,q}(n) = 0$ for all $q \geq 0$ and $n \geq 1.$ The following tables record the values of $\dim E_\infty^{p,q}(n)$ for $q \geq 1$ and $q \leq p \leq q+3$ and for $q\geq 1$ and $0 \leq p \leq 4$, respectively.

\begin{center}
\begin{tabular}{c|c|c|c|c}
range of $n$ & $\dim E_\infty^{q,q}(n)$ & $\dim E_\infty^{q+1,q}(n)$ & $\dim E_\infty^{q+2,q}(n)$ & $\dim E_\infty^{q+3,q}(n)$ \\ \hline
$n \geq 2q+2$ & $q+1$ & $3q+2$ & $3q+1$ & $q$\\
$n = 2q+1$ & $q+1$ & $2q+2$ & $q+1$ & $0$\\
$n \leq 2q$ & $0$ & $0$ & $0$ & $0$
\end{tabular}
\end{center}

\begin{center}
\begin{tabular}{c|c|c|c|c|c}
range of $n$ & $\dim E_\infty^{0,0}(n)$ & $\dim E_\infty^{1,0}(n)$ & $\dim E_\infty^{2,0}(n)$ & $\dim E_\infty^{3,0}(n)$ & $\dim E_\infty^{4,0}(n)$ \\ \hline
$n \geq 1$ & $1$ & $2$ & $1$ & $0$ & $0$
\end{tabular}
\end{center}

Recall that $$b_i(n) = \sum_{p+q = i} \dim E_\infty^{p,q}(n).$$ Thus for $i$ even, $$b_i(n) = \dim E_\infty^{\frac{i}{2},\frac{i}{2}}(n) + \dim E_\infty^{\frac{i+2}{2}, \frac{i-2}{2}}(n),$$ and for $i$ odd, $$b_i(n) = \dim E_\infty^{\frac{i+1}{2}, \frac{i-1}{2}}(n) + \dim E_\infty^{\frac{i+3}{2}, \frac{i-3}{2}}(n).$$ 

We have now proven the following propostion, as well as Proposition \ref{stableg1betti}.

\begin{proposition}[Cohomology of the elliptic braid group]
\label{P:g1SV}
The stable Betti numbers of $\Conf^n \Sigma_1$ are $$b_0 =1, b_1 = 2, b_2 = 3, b_3 = 5, b_ 4 = 7, \hdots, b_i = 2i-1,\hdots.$$ Furthermore, the entire cohomology is given by 
$$
H^{i}(\Conf^n \Sigma_1; \Q)= \begin{cases}
\Q^{2i-1} & \mbox{if } n \geq i+1, i \geq 2\\
\Q^{\frac{3i-4}{2}} & \mbox{if } n = i, i \mbox{ even}, i \geq 2\\
\Q^{\frac{3i-1}{2}} & \mbox{if } n = i, i \mbox{ odd}, i \geq 3\\
\Q^{\frac{i}{2}} & \mbox{if } n =i-1, i \mbox{ even}, i \geq 4\\
\Q^{\frac{i-3}{2}} & \mbox{if } n = i-1, i \mbox{ odd}, i \geq 3\\
\Q^{2} & \mbox{if } i = 1,\\
\Q & \mbox{if } i =0,\\
0 & \mbox{else}
\end{cases}.
$$
Since $\Conf^n \Sigma_1$ is $K(\pi,1)$ for the elliptic braid group $B_n(\Sigma_1)$ \cite{Birman1969, Scott1970}, this gives
$$
H^{i}(B_n(\Sigma_1); \Q)= H^i(\Conf^n\Sigma_1;\Q).
$$
\end{proposition}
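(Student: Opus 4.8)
The plan is to read off the Betti numbers directly from the $E_\infty$ tables already assembled above; the substantive work — computing bases for each $E_2^{p,q}(n)$ and the ranks of the differentials $d^{p,q}\colon E_2^{p,q}(n) \to E_2^{p+2,q-1}(n)$ (note $D=2$ for $\Sigma_1$) — has been carried out, so what remains is to combine $\dim E_\infty^{p,q}(n) = \dim E_2^{p,q}(n) - \rank d^{p,q} - \rank d^{p-2,q+1}$ into $b_i(n) = \sum_{p+q=i}\dim E_\infty^{p,q}(n)$.

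First I would identify, for each fixed $i$, which entries of the $E_\infty$ page can be nonzero. The tables show $\dim E_\infty^{p,q}(n)=0$ outside the band $q \leq p \leq q+3$, so imposing $p+q=i$ forces $(i-3)/2 \leq q \leq i/2$, a window of length $3/2$ containing exactly two integers whenever $i \geq 2$. For $i$ even these are $q=i/2$ and $q=i/2-1$, giving the two terms $\dim E_\infty^{i/2,i/2}(n)$ and $\dim E_\infty^{i/2+1,i/2-1}(n)$; for $i$ odd they are $q=(i-1)/2$ and $q=(i-3)/2$, giving $\dim E_\infty^{(i+1)/2,(i-1)/2}(n)$ and $\dim E_\infty^{(i+3)/2,(i-3)/2}(n)$. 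This is precisely the two-term decomposition of $b_i(n)$ recorded just before the proposition. A useful observation to streamline the bookkeeping is that the separately tabulated $q=0$ row agrees with the generic $q\geq 1$ formulas evaluated at $q=0$, so the only genuinely exceptional cases are $i=0,1$, where the second diagonal would require $q<0$ and the sum collapses to a single term read off from the $q=0$ row.

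Next I would substitute and simplify, splitting into the parity of $i$ and the range of $n$. In the stable range the two even-$i$ contributions are $q+1$ and $3q+1$ (at $q=i/2$ and $q=i/2-1$), and the two odd-$i$ contributions are $3q+2$ and $q$ (at $q=(i-1)/2$ and $q=(i-3)/2$); in both cases the two terms sum to $2i-1$, and tracing the thresholds $n\geq 2q+2$, $n=2q+1$, $n\leq 2q$ through $p+q=i$ shows that this stable value is attained exactly when $n \geq i+1$. Feeding the $n=2q+1$ and $n=2q$ rows into the same two terms produces the intermediate cases $n=i$ and $n=i-1$, yielding the fractional exponents $(3i-4)/2$, $(3i-1)/2$, $i/2$, and $(i-3)/2$ according to parity. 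Finally the braid-group statement is immediate: by \cite{Birman1969, Scott1970} the space $\Conf^n\Sigma_1$ is a $K(\pi,1)$ for $B_n(\Sigma_1)$, so $H^i(B_n(\Sigma_1);\Q)=H^i(\Conf^n\Sigma_1;\Q)$.

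The only real difficulty is bookkeeping at the boundary. One must verify that, after the substitution $q\mapsto i$, the two contributing terms switch on and off at compatible values of $n$, so that the several range-conditions in the tables glue into the clean conditions $n\geq i+1$, $n=i$, $n=i-1$ of the statement; and one must check the small cases ($i\in\{0,1\}$, and small $n$ where a diagonal leaves the supported band) by hand against the explicit low-degree tables, since there the generic formulas can degenerate.
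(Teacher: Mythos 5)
Your proposal is correct and follows the paper's own route exactly: the paper's proof of this proposition is precisely the assembly you describe --- summing $\dim E_\infty^{p,q}(n)$ over the diagonal $p+q=i$ using the band $q\le p\le q+3$, with the two stable contributions $q+1$ and $3q+1$ for even $i$, $3q+2$ and $q$ for odd $i$, and the thresholds $n\ge 2q+2$, $n=2q+1$, $n\le 2q$ --- applied to the $E_2$-dimension and rank tables computed earlier in that subsection. The glued values and ranges you obtain ($2i-1$ for $n\ge i+1$; $(3i-4)/2$ or $(3i-1)/2$ for $n=i$; $i/2$ or $(i-3)/2$ for $n=i-1$; the collapse to a single $q=0$ term when $i\in\{0,1\}$) all agree with the paper's.
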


Prior to our work, Napolitano \cite[Table 2]{Napolitano2003} had computed $H_i(\Conf^n\Sigma_1;\Z)$ for $1 \leq n \leq 6$ and $0 \leq i \leq 7$, and Kallel \cite[Corollary 1.7]{Kallel2008} computed $H_1(\Conf^n\Sigma_1; \Z)$ for $n \geq 3.$ Concurrently with our work, Scheissl \cite{Scheissl2016} independently computed the stable and unstable Betti numbers of $\Conf^n\Sigma_1$ also using the Cohen--Taylor--Totaro--Kriz spectral sequence, and Drummond-Cole and Knudsen \cite[Corollaries 4.5--4.7]{Drummond-ColeKnudsen2016} not only computed the stable and unstable Betti numbers of $\Conf^n\Sigma_1$, but did so for all surfaces of finite type via a method derived from factorization homology. For other related computations, see \cite{BrownWhite1981, BodigheimerCohen1988, BodigheimerCohenTaylor1989, Knudsen2015, Azam2015}.

\appendix

\section{Tables of Betti numbers\\ by Matthew Christie and Derek Francour}

\begin{landscape}
\begin{table}
\caption{Betti numbers of $\Conf^n \C\P^1 \times \C\P^1$ for $i \leq 26$ and $n \leq 21$.}
\label{Tab:cp1^2one}
\centering

\end{table}
\end{landscape}

\subsection*{Acknowledgements} 

This work was partially supported by National Science Foundation grants DMS-1147782, DMS-1301690, DMS-0838210, and DMS-1502553 and the Alfred P. Sloan foundation. The author would like to thank Melanie Matchett Wood for her insights and vast editing help. The author would also like to thank Ben Knudsen, Jeremy Miller, Dan Petersen, and Craig Westerland for helpful conversation.

\end{document}